\sloppy \pagestyle{plain}
\newcounter{cequation}[section]
\newtheorem{theorem}[cequation]{Theorem}
\newtheorem*{theorem*}{Theorem}
\newtheorem{lemma}[cequation]{Lemma}
\newtheorem{corollary}[cequation]{Corollary}
\newtheorem{proposition}[cequation]{Proposition}
\theoremstyle{definition}
\newtheorem{example}[cequation]{Example}
\newtheorem{definition}[cequation]{Definition}
\newtheorem*{definition*}{Definition}
\theoremstyle{remark}
\newtheorem{remark}[cequation]{Remark}
\makeatletter\@addtoreset{equation}{section}
\renewcommand\labelenumi{(\arabic{enumi})}
\newcommand{\comm}[1]{} %eto kommentarii
\newcommand{\CC}{\mathbb{C}}
\newcommand{\RR}{\mathbb{R}}
\newcommand{\QQ}{\mathbb{Q}}
\newcommand{\PP}{\mathbb{P}}
\newcommand{\KK}{\mathbb{K}}
\newcommand{\LL}{\mathbb{L}}
\newcommand{\SSS}{\mathfrak{S}}
\newcommand{\AAA}{\mathfrak{A}}
\newcommand{\mumu}{{\boldsymbol{\mu}}}
\newcommand{\Aut}{\operatorname{Aut}}
\newcommand{\Bir}{\operatorname{Bir}}
\newcommand{\Gal}{\operatorname{Gal}}
\DeclareMathAlphabet{\mathdutchcal}{U}{dutchcal}{m}{n}
\SetMathAlphabet{\mathdutchcal}{bold}{U}{dutchcal}{b}{n}
\def \ge {\geqslant}
\def \le {\leqslant}
\date{}
\title{Automorphisms of del Pezzo surfaces without points}
\author{Constantin Shramov and Anastasia Vikulova}
\address{\emph{Constantin Shramov}
\newline
\textnormal{Steklov Mathematical Institute of RAS,
8 Gubkina street, Moscow 119991, Russia.
}
\newline
\textnormal{\texttt{costya.shramov@gmail.com}}}
\address{\emph{Anastasia Vikulova}
\newline
\textnormal{Steklov Mathematical Institute of RAS,
8 Gubkina street, Moscow 119991, Russia.
}
\newline
\textnormal{\texttt{vikulovaav@gmail.com}}}
\thanks{This work was supported by the Russian Science Foundation under grant no. 23-11-00033, https://rscf.ru/en/project/23-11-00033/}
\begin{document}

\begin{abstract}
We study automorphism groups of del Pezzo surfaces without points
over a field of zero characteristic, and
estimate their Jordan constants.
\end{abstract}

\maketitle
\tableofcontents

\section{Introduction}

Given a group of geometric origin, it is natural to
try to classify its finite subgroups, since finite groups
are often involved in nice geometric constructions. When an
explicit classification is either impossible or is just too
complicated, the following definition measuring the ``complexity''
of finite groups proved to be useful.

\begin{definition}[{see \cite[Definition~2.1]{Popov},
\cite[Definition~1]{Popov14}}]
\label{definition:Jordan}
A group~$\Gamma$ is called \emph{Jordan}
(alternatively, one says that $\Gamma$ has \emph{Jordan property}),
if there exists a constant $J$ such that any finite
subgroup of $\Gamma$
contains a normal abelian subgroup of index at most~$J$.
If $\Gamma$ is Jordan, we call the minimal such constant $J$
the \emph{Jordan constant} of $\Gamma$ and denote it by~$J(\Gamma)$.
\end{definition}

Jordan property often holds for automorphism and
birational automorphism groups. In particular, it holds for the
birational automorphism group of
the plane over a field of characteristic zero,
see~\mbox{\cite[Theorem~5.3]{Serre-2009}}.
Moreover, the following result was established
in~\mbox{\cite[Theorem~1.9]{Yasinsky}}.

\begin{theorem}
\label{theorem:Egor}
The Jordan constant of the birational automorphism group of
the plane over an algebraically closed
field of characteristic zero equals~$7200$.
\end{theorem}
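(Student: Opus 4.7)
The plan is to combine the equivariant Minimal Model Program with the classification of finite subgroups of automorphism groups of rational surfaces, applied case by case.

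First, by the classical regularisation theorem together with the equivariant MMP for rational surfaces, I may assume that any finite subgroup $G\subset\Bir(\PP^2)$ is realised as a subgroup of $\Aut(X)$ for a $G$-Mori fiber space $X$: either a $G$-del Pezzo surface of degree $d\in\{1,\ldots,9\}$ with $\Pic(X)^G\cong\ZZ$, or a $G$-conic bundle $X\to\PP^1$ with $\Pic(X)^G\cong\ZZ^2$.

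Next, for each Mori fiber type I would bound the minimal index of a normal abelian subgroup of $G$. For del Pezzo surfaces, the Dolgachev--Iskovskikh classification gives the possible $\Aut(X)$, and the faithful action of $G$ on the $-K_X$-lattice constrains the candidates further. For conic bundles, one uses the exact sequence
\[
1 \longrightarrow G_F \longrightarrow G \longrightarrow G_B \longrightarrow 1,
\]
where $G_F$ acts trivially on the base $\PP^1$ and so embeds into $\PGL_2$ via its action on the generic fiber, and $G_B\subset\PGL_2$ is the image on the base; this reduces the question to finite subgroups of $\PGL_2$, with additional constraints imposed by the reducible fibers.

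The extremal case is expected to arise from $X=\PP^1\times\PP^1$, a del Pezzo surface of degree $8$. Taking the icosahedral $A_5\subset\PGL_2$ on each factor together with the involution swapping them produces the wreath product $A_5\wr\ZZ/2$ of order $60^2\cdot 2 = 7200$ inside $\Aut(X)$. Since $A_5$ is nonabelian simple, $A_5\wr\ZZ/2$ contains no non-trivial normal abelian subgroup, so this subgroup certifies the lower bound $J(\Bir(\PP^2))\ge 7200$.

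The main obstacle is the matching upper bound: verifying uniformly across all $G$-Mori fiber spaces that the index of a normal abelian subgroup of $G$ never exceeds $7200$. The delicate cases are the conic bundles and the del Pezzo surfaces of small degree, where the Weyl-group data of the Picard lattice and the Bertini/Geiser involutions permit rather large finite subgroups; a careful group-theoretic computation of normal abelian subgroups in the relevant extensions is required to close the gap to $7200$.
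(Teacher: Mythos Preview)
The paper does not prove this theorem at all: it is quoted as an external result, attributed to \cite[Theorem~1.9]{Yasinsky}, and used only as background and as input to Corollary~\ref{corollary:dP8}. There is therefore no ``paper's own proof'' to compare against.

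As for your proposal on its own merits: the strategy you outline --- regularisation, equivariant MMP reduction to $G$-del Pezzo surfaces and $G$-conic bundles, and the wreath product $\AAA_5\wr\mumu_2\subset\Aut(\PP^1\times\PP^1)$ for the lower bound --- is indeed the shape of Yasinsky's argument, and your lower-bound witness is correct. However, your write-up is not a proof but a plan: you explicitly flag the upper bound as ``the main obstacle'' and do not carry out the case analysis. That case analysis is the entire content of the theorem; for del Pezzo surfaces of degrees $1$ and $2$ and for conic bundles with many reducible fibres the relevant groups can be large (e.g.\ containing $\mumu_2^6$ or groups of order up to a few hundred), and one genuinely has to exhibit normal abelian subgroups of small index in each situation rather than merely bound $|G|$. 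Until those computations are supplied, what you have is an outline, not a proof.
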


A generalization of Theorem~\ref{theorem:Egor} for an arbitrary field of characteristic zero
was obtained in~\cite{Zaitsev}.
Furthermore, there exists an analog of Theorem~\ref{theorem:Egor} for forms of $\PP^2$.
Given a variety $X$, by $\Bir(X)$ we will denote the group
of birational automorphisms of $X$.

\begin{theorem}[{see \cite[Theorem~1.2(ii)]{Shramov-SB} and \cite{Shramov-SB-short}}]
\label{theorem:SB}
Let $\KK$ be a field of characteristic zero, and let $S$ be a del Pezzo
surface of degree $9$ over $\KK$. Suppose that~$S$ has no $\KK$-points.
Then
$$
J(\Aut(S))\le J(\Bir(S))\le 3.
$$
Furthermore, these bounds are attained
over a suitable field~$\KK$.
\end{theorem}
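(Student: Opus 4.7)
The plan is to reduce the bound on $J(\Bir(S))$ to an algebraic statement about finite subgroups of the unit group of a degree-$3$ central division algebra. Since $S$ is a del Pezzo surface of degree $9$ over $\KK$ with no $\KK$-points, $S$ is a non-trivial Severi-Brauer surface, and hence corresponds to a central simple division algebra $A$ of degree $3$ over $\KK$; one has $\Aut(S)\cong\PGL_1(A)=A^\times/\KK^\times$.

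To pass from $\Bir(S)$ to automorphism groups, I would apply the $G$-equivariant minimal model program to any finite subgroup $G\subset\Bir(S)$, producing a $G$-Mori fiber space $S'$ birational to $S$. Since $S$ is not $\KK$-rational, neither is $S'$; analysing the list of two-dimensional Mori fiber spaces over $\KK$ with no $\KK$-points, one concludes that $S'$ is either another non-trivial Severi-Brauer surface or a conic bundle without a section, and it suffices to bound $J(\Aut(S'))$ in each case. I treat the Severi-Brauer case; the conic-bundle case is handled by an analogous (and easier) argument.

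For a finite subgroup $G\subset\PGL_1(A)$, take the preimage $\tilde G\subset A^\times$ of $G$, a central extension of $G$ by the finite cyclic group $Z=\tilde G\cap\KK^\times$. The $\KK$-subalgebra $B:=\KK[\tilde G]\subset A$ is itself a sub-division-algebra of $A$, hence $\dim_{\KK}B\in\{1,3,9\}$. If $\dim_{\KK}B\le 3$, then $B$ is a subfield, $\tilde G\subset B^\times$ is cyclic, and thus so is $G$, giving the Jordan bound trivially. Otherwise $B=A$ and $\tilde G$ generates $A$ as a $\KK$-algebra; the centralizer in $A$ of a non-central element is a maximal subfield of degree $3$, so every non-identity element of $G$ has cyclic centralizer. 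Via the twisted group algebra picture $\KK^{\omega}[G]\twoheadrightarrow A$, one extracts a Heisenberg-type central extension inside $\tilde G$ whose image $N$ in $G$ is a normal abelian subgroup isomorphic to $(\ZZ/3)^2$. It remains to bound $[G:N]$.

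The principal difficulty is the last step: showing that $[G:N]\le 3$, equivalently that the conjugation action of $G/N$ on $N$ lands in a subgroup of $\SL_2(\mathbf{F}_3)$ of order dividing $3$. This excludes the full Hessian group of order $216$, as well as the primitive subgroups $\PSLF$, the Valentiner group $A_6$, and $A_5$, from embedding in $\PGL_1(A)$. The exclusion is a Schur-index computation: for each of these groups, the Brauer-class obstruction of the standard three-dimensional projective representation has order $1$ or $2$ in $\Br(\KK)$, incompatible with the order-$3$ class $[A]$ of the division algebra $A$. Finally, attainment of the bound $3$ is exhibited by constructing, over a suitable field $\KK$, a cyclic algebra $A=(a,b)_{\omega}$ of degree $3$ realising the Heisenberg group of order $27$ inside $A^\times$; this yields a subgroup $G$ of order $27$ in $\PGL_1(A)$ whose maximal normal abelian subgroup has index exactly $3$.
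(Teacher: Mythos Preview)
The paper does not prove this theorem; it is quoted as an external result from \cite{Shramov-SB} and \cite{Shramov-SB-short}, so there is no in-paper argument to compare against. Your overall strategy---identifying $S$ with a non-trivial Severi--Brauer surface, passing to the associated degree-$3$ division algebra $A$, and analysing finite subgroups of $A^\times/\KK^\times$---is indeed the framework used in those references, and the reduction from $\Bir(S)$ to $\Aut$ of a $G$-minimal model is also the right move.

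That said, your sketch has two substantive gaps. First, the step ``one extracts a Heisenberg-type central extension inside $\tilde G$ whose image $N$ in $G$ is a normal abelian subgroup isomorphic to $(\ZZ/3)^2$'' is asserted without justification and, as stated, is not correct in general: the normal abelian subgroup of index at most $3$ guaranteed by the theorem need not be a $3$-group at all (nothing you have said excludes groups of the shape $\mumu_n\rtimes\mumu_3$ with $3\nmid n$, which contain no copy of $(\ZZ/3)^2$). The argument in \cite{Shramov-SB} instead goes through the classification of finite subgroups of $\PGL_3(\bar\KK)$ and rules out, via Schur-index/Brauer-class obstructions, every group whose relevant $3$-dimensional projective representation cannot descend to a division algebra of period~$3$; you allude to this for the primitive groups but do not carry it through, and you bypass it entirely for the imprimitive (monomial) ones, which is where most of the work lies.

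Second, your attainment example is wrong as written. In the cyclic algebra $A=(a,b)_\omega$ the standard generators $x,y$ satisfy $yx=\omega xy$ with $\omega\in\KK^\times$, so the Heisenberg group $\langle x,y\rangle$ of order $27$ in $A^\times$ has centre $\langle\omega\rangle\subset\KK^\times$. Its image in $\PGL_1(A)=A^\times/\KK^\times$ is therefore the \emph{abelian} group $(\ZZ/3)^2$ of order $9$, with Jordan constant $1$, not a non-abelian group of order $27$. Exhibiting a genuinely non-abelian finite subgroup of $\Aut(S)$ requires producing a further element of $A^\times$ that normalises this $(\ZZ/3)^2$ non-trivially; arranging this over a suitable field is exactly the content of \cite{Shramov-SB-short}, and it is not automatic.
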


\begin{remark}
Over certain fields, a stronger version of Theorem~\ref{theorem:SB} holds.
For instance, if $S$ is a del Pezzo
surface of degree $9$ over the field of rational numbers $\QQ$ such that $S$ has no $\QQ$-points, then
every finite subgroup of $\Bir(S)$ is abelian, see~\mbox{\cite[Corollary~1.4]{Shramov-SB-final}}
or~\mbox{\cite[Theorem~1.3]{Vikulova-SB}}.
\end{remark}

Comparing Theorems~\ref{theorem:Egor} and~\ref{theorem:SB}, we see
that the absence of points over the base field imposes
rather strong restrictions on the birational automorphism group
of a surface, at least in the case of forms of~$\PP^2$.
The same happens in the case of cubic surfaces.

\begin{theorem}[{\cite[Theorem~1.4]{Shramov-Cubics}}]
\label{theorem:cubics}
Let $\KK$ be a field of characteristic zero, and let~\mbox{$S\subset\PP^3$}
be a smooth cubic surface over $\KK$.
Suppose that $S$ has no $\KK$-points.
Then the group~\mbox{$\Aut(S)$} is abelian, and $J(\Bir(S))\le 3$.
Furthermore, the latter bound is attained
over a suitable field~$\KK$.
\end{theorem}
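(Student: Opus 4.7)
The plan is to treat $\Aut(S)$ via its action on the Picard lattice, and $\Bir(S)$ via the equivariant Sarkisov program. Via the anticanonical embedding $S\hookrightarrow\PP^3_\KK$ as a cubic surface, $\Aut(S)$ acts faithfully on the set of $27$ geometric lines and thereby embeds into the Weyl group $W(E_6)$. Its image centralizes the image $H$ of the Galois representation $\Gal(\bar\KK/\KK)\to W(E_6)$ arising from the action on $\Pic(\bar S)$. I would translate the hypothesis that $S$ has no $\KK$-point into a combinatorial constraint on $H$: no $H$-invariant effective divisor class on $\bar S$ is represented by a reduced zero-cycle of degree one supported in the smooth locus, which rules out certain orbits of exceptional curves, conics, and twisted cubics. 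I would then enumerate the admissible subgroups $H\le W(E_6)$ by combining this constraint with the Manin-style classification of Galois actions on smooth cubic surfaces, and verify in each case that the centralizer $C_{W(E_6)}(H)$ is abelian. This gives the abelianness of $\Aut(S)$.

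For the Jordan estimate on $\Bir(S)$, I would apply the $G$-equivariant Sarkisov program to any finite subgroup $G\subset\Bir(S)$ to obtain a $G$-Mori fibre space $S'\to B$ birational to $S$. By the Lang--Nishimura lemma, $S'$ inherits the absence of $\KK$-points. This leaves two cases: either $B$ is a point, so that $S'$ is a del Pezzo surface with $G$-invariant Picard rank one and no $\KK$-point, handled by the first part together with Theorem~\ref{theorem:SB} and its analogues for intermediate degrees; or $S'\to B$ is a $G$-conic bundle over a geometrically rational curve $B$ without $\KK$-points, in which case $G$ fits into an extension $1\to G_\eta\to G\to G_B\to 1$ with $G_\eta$ abelian (acting on the generic conic fibre) and $G_B\subset\Aut(B)$ controlled by the theory of pointless conics. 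In every case the conclusion is $|G/A|\le 3$ for some normal abelian subgroup $A$ of $G$. To see that $3$ is attained, I would exhibit an explicit diagonal cubic $x_0^3+a_1x_1^3+a_2x_2^3+a_3x_3^3=0$ over a number field with prescribed local obstructions at a suitable prime, admitting a Sarkisov self-link whose non-abelian extension of $\Aut(S)$ realises the bound.

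The main obstacle is the classification step in the abelianness argument: $W(E_6)$ has order $51840$ and a large subgroup lattice, so a direct enumeration of admissible $H$ is cumbersome without a conceptual shortcut. I would hope for a criterion phrased intrinsically on the root lattice $E_6$, for example that pointlessness forces $H$ to fix no root or line class and to contain a regular element, which would collapse the case analysis. A secondary obstacle is the conic-bundle branch of the Sarkisov analysis, where one must simultaneously control the Brauer class of the generic fibre and the $G$-action on the base curve, and check that no hidden non-abelian extension of index larger than $3$ can appear.
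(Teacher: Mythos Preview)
This theorem is not proved in the present paper; it is quoted from \cite[Theorem~1.4]{Shramov-Cubics} and used as a black box (notably in the proof of Proposition~\ref{proposition:main}). There is therefore no proof here against which to compare your proposal.

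Your outline is broadly the strategy of the cited reference, so the plan is sound. One shortcut worth highlighting, which the present paper does exploit in proving Proposition~\ref{proposition:main}(ii), is that a smooth cubic surface without $\KK$-points automatically has no points of degree~$2$: the line through a Galois-conjugate pair of points on $S_{\bar\KK}$ would meet $S$ residually in a $\KK$-point. Hence if $S$ is minimal, Iskovskikh's rigidity result \cite[Theorem~1.6(ii)]{Iskovskikh96} gives $\Bir(S)=\Aut(S)$ outright, and the Jordan bound for $\Bir(S)$ follows immediately from the abelianness of $\Aut(S)$, with no Sarkisov analysis required. This secant-line observation also constrains the $G$-minimal models arising in your non-minimal case (ruling out low-degree points on any birational model), which should trim the del Pezzo and conic-bundle branches of your program considerably.
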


However, for del Pezzo surfaces of degree $2$ the situation is entirely different.
Namely, the largest (and most complicated) possible automorphism
group can be attained on a surface without points. Moreover, in this case
there exists an example when the surface is minimal, i.e. has Picard rank~$1$.

\begin{theorem}[{\cite[Theorem~1.4]{Vikulova-dP}}]
\label{theorem:Vikulova}
There exists a minimal del Pezzo surface $S$ of degree~$2$ over a
field of characteristic zero such that $J(\Aut(S))=168$.
\end{theorem}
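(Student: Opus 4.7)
The plan is to realize $S$ as a quadratic twist of the double cover of $\PP^2$ branched over the Klein quartic, defined over a Laurent series field. Recall that $C=\{x^3y+y^3z+z^3x=0\}\subset\PP^2$ has $\Aut(C)\cong\PSLF$, and that its double cover
$$
S_K=\{w^2=x^3y+y^3z+z^3x\}\subset\PP(1,1,1,2)
$$
is a del Pezzo surface of degree $2$ whose full automorphism group over any algebraically closed field of characteristic zero is $\PSLF\times\langle\sigma\rangle$, the Geiser involution $\sigma\colon w\mapsto -w$ being central.

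Let $\KK=\bar\QQ((t))$ and set
$$
S=\{t\,w^2=x^3y+y^3z+z^3x\}\subset\PP(1,1,1,2),
$$
a smooth del Pezzo surface of degree $2$ over $\KK$. Over the quadratic extension $\LL=\KK(\sqrt t)$, the map $\phi\colon(x{:}y{:}z{:}w)\mapsto(x{:}y{:}z{:}\sqrt t\,w)$ identifies $S_\LL$ with $(S_K)_\LL$; a direct calculation shows that the nontrivial element $\tau\in\Gal(\LL/\KK)$, transported to $\bar S_K$ through $\phi$, acts as $\sigma$.

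Since $\bar\QQ$ is algebraically closed, every $(-1)$-curve of $\bar S_K$ is $\bar\QQ$-rational, so $\Gal(\bar\KK/\KK)$ acts trivially on $\Pic(\bar S_K)$. Hence on the twisted surface the Galois action on $\Pic(\bar S)\cong\Pic(\bar S_K)$ factors through $\langle\tau\rangle$, with $\tau$ acting as $\sigma_*$. Since $\sigma$ fixes $K_S$ and acts as $-\mathrm{id}$ on $K_S^\perp$ in $\Pic(\bar S_K)\otimes\QQ$, the $\KK$-invariant sublattice has rank $1$, so $S$ is minimal. Moreover, an automorphism of $\bar S$ descends to $\KK$ iff its transport to $\bar S_K$ commutes with $\tau$, i.e.\ with $\sigma$; centrality of $\sigma$ in $\Aut(\bar S_K)=\PSLF\times\langle\sigma\rangle$ therefore yields $\Aut(S/\KK)=\PSLF\times\langle\sigma\rangle$.

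Finally, $\PSLF$ is simple and nonabelian, whence $J(\Aut(S))\ge J(\PSLF)=168$, and the abelian normal subgroup $\langle\sigma\rangle\subset\Aut(S/\KK)$ of index $168$ gives the matching upper bound $J(\Aut(S))\le 168$. The hardest step is the identification of the twisted Galois action on $\Pic(\bar S)$ and of its invariants, which reduces to the centrality of $\sigma$ and to the classical fact that the Geiser involution acts as $-\mathrm{id}$ on $K_S^\perp$.
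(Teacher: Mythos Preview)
The paper does not supply its own proof of this statement; it is quoted from Vikulova's paper~\cite{Vikulova-dP}. Your quadratic-twist construction over $\KK=\bar\QQ((t))$ is correct and does yield a del Pezzo surface of degree~$2$ with Picard rank~$1$ and $\Aut(S)\cong\PSLF\times\mumu_2$, hence $J(\Aut(S))=168$; this proves the theorem exactly as stated. One small point: your sentence ``the abelian normal subgroup $\langle\sigma\rangle$ of index $168$ gives $J(\Aut(S))\le 168$'' is not literally a proof of the upper bound, since $J$ is a maximum over all finite subgroups; but as $\sigma$ is central, for every subgroup $H\subset\Aut(S)$ the intersection $H\cap\langle\sigma\rangle$ is abelian normal in $H$ with $H/(H\cap\langle\sigma\rangle)\hookrightarrow\PSLF$, so the index is at most~$168$, which fills the gap.

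There is, however, a discrepancy with how the result is used. Your surface visibly has $\KK$-points: for instance $(1{:}0{:}0{:}0)$ satisfies $tw^2=x^3y+y^3z+z^3x$, as does every $\KK$-point of the Klein quartic. The context in the paper, and the title of~\cite{Vikulova-dP}, make clear that the intended content is a \emph{pointless} minimal example, and this is exactly what is invoked in the proof of Theorem~\ref{theorem:main}(i) to show that the bound $168$ is attained among surfaces without $\KK$-points. So while your argument establishes the displayed statement, it does not supply the example the paper actually needs. Arranging pointlessness is the genuinely additional work in Vikulova's construction: any $\KK$-point on the branch quartic lifts to a $\KK$-point on the double cover regardless of the quadratic twist, so one must first realise a $\PSLF$-invariant smooth plane quartic with no rational points over the chosen base field, which your choice of the standard Klein equation over $\bar\QQ((t))$ does not achieve.
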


The goal of this paper is to study automorphism groups
of other del Pezzo surfaces without points
over fields of characteristic
zero, and to bound their Jordan constants.
Our main result is as follows.

\begin{theorem}\label{theorem:main}
Let $\KK$ be a field of characteristic zero, and let $S$ be a del Pezzo
surface of degree $d$ over $\KK$. Suppose that $S$ has no $\KK$-points.
The following assertions hold.
\begin{itemize}
\item[(i)] If $d=2$, then $J(\Aut(S))\le 168$.

\item[(ii)] If $d=4$, then $J(\Aut(S))\le 2$.

\item[(iii)] If $d=6$, then $J(\Aut(S))\le 4$.

\item[(iv)] If $d=8$, then $J(\Aut(S))\le 7200$.
\end{itemize}
Furthermore, each of the above bounds is attained
over a suitable field $\KK$.
\end{theorem}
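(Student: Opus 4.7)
The plan is to handle the four degrees separately. In each case I would combine three ingredients: (a) the classification of $\Aut(S_{\bar{\KK}})$ for del Pezzo surfaces of degree $d$ in characteristic zero; (b) Galois-theoretic constraints imposed on $\Aut(S)\hookrightarrow\Aut(S_{\bar{\KK}})$ by the absence of $\KK$-points; and (c) an explicit example attaining the bound. Cases (i) and (iv) are the easier ones. For $d=2$, the Hurwitz bound for automorphisms of the branch plane quartic (with the maximum $\PSLF$ realised by the Klein quartic) gives $J(\Aut(S_{\bar{\KK}}))\le168$; combined with $\Aut(S)\hookrightarrow\Aut(S_{\bar{\KK}})$ this yields (i), and attainment is Theorem~\ref{theorem:Vikulova}. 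For $d=8$, pointlessness forces $S_{\bar{\KK}}\cong\PP^1\times\PP^1$ (the unique $(-1)$-section of $\mathbb{F}_1$ would otherwise descend to a $\KK$-point), so $\Aut(S)\subseteq(\PGL_2\times\PGL_2)\rtimes\ZZ/2$; since every finite subgroup of $\PGL_2$ in characteristic zero is cyclic, dihedral, $\AAA_4$, $\SSS_4$ or $\AAA_5$, we obtain $J(\Aut(S))\le 60\cdot60\cdot2=7200$. For attainment, I would take a pointless conic $C$ over $\RR$ and put $S=C\times_{\KK}C$: this $S$ is pointless of degree $8$, and $\Aut(S)$ contains $(\AAA_5\times\AAA_5)\rtimes\ZZ/2$, a group of order $7200$ whose only normal abelian subgroup is trivial.

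Cases (ii) and (iii) are more delicate, since the claimed bounds are strictly smaller than $J(\Aut(S_{\bar{\KK}}))$ for generic $S$. For $d=6$, I would analyse the combined action of $\Aut(S)$ and $\Gal(\bar\KK/\KK)$ on the hexagon of six $(-1)$-curves of $S_{\bar\KK}$: a Galois-stable union of such curves is $\KK$-defined, so pointlessness rules out both $\KK$-rational individual curves and $\KK$-points at intersections of Galois-conjugate adjacent ones, constraining the possible shapes of the image of $\Aut(S)$ in the dihedral factor of $\Aut(S_{\bar\KK})\cong\mathbb{G}_m^2\rtimes D_{12}$; combined with the fact that the kernel (a finite subgroup of the pointless form of the torus $\Aut^0(S_{\bar\KK})\cong\mathbb{G}_m^2$) is abelian, this yields $J(\Aut(S))\le 4$. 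For $d=4$, I would use the pencil of quadrics containing $S\subset\PP^4$ and its five singular members: $\Aut(S)$ acts on the pencil through $\PGL_2$ and permutes the five singular members through a subgroup of $\SSS_5$, and pointlessness combined with $W(D_5)\cong(\ZZ/2)^4\rtimes\SSS_5$ should force every finite subgroup of $\Aut(S)$ to contain a normal abelian subgroup of index at most $2$. Attainment in each case should come from an explicit pointless construction over $\QQ$ or a small cyclotomic extension, realising the extremal Galois action.

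The main technical obstacle will be case (ii). Here the bound $J\le 2$ cannot be extracted from a coarse analysis of $\Aut(S_{\bar{\KK}})$, whose Jordan constant can be as large as $120$ (for the Weyl-group-symmetric del Pezzo). Instead one has to enumerate the possible Galois actions on $\Pic(S_{\bar\KK})\cong\ZZ^{\oplus 6}$, and for each form in which $\Aut(S)$ would a priori be too large, construct a $\KK$-rational zero-cycle on $S$ — for instance via an isolated fixed point of some element of $\Aut(S)$, an intersection point of a Galois-stable configuration of $(-1)$-curves, or a vertex of a singular quadric in the pencil — in order to contradict pointlessness. This form-by-form Galois-cohomological case analysis, rather than the geometric setup itself, is where the technical heart of the proof will lie.
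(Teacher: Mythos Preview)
Your overall framework matches the paper's: each degree is handled separately, combining the structure of $\Aut(S_{\bar\KK})$, Galois constraints from pointlessness, and an explicit attaining example. Cases $d=2$ and $d=8$ are essentially as in the paper (for $d=8$ the paper simply cites Yasinsky's theorem that $J(\Bir(\PP^2_{\bar\KK}))=7200$ rather than arguing $60\cdot60\cdot2$ directly, and uses the same $C\times C$ example over~$\RR$). For $d=4$ your diagnosis is correct, and the paper follows the line you anticipate: it first shows that an automorphism of order $3$ or $5$ forces a $\KK$-point (via a distinguished fixed point, respectively a unique fixed $(-1)$-curve), reducing $\Aut(S)$ to a $2$-group, and then analyses the extension $1\to\mumu_2^k\to\Aut(S)\to F\to 1$ with $F\in\{1,\mumu_2,\mumu_4\}$.

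The genuine gap is in degree $6$. Your proposed mechanism---ruling out large images $\Xi\subset\SSS_3\times\mumu_2$ by exhibiting a Galois-stable $(-1)$-curve or a $\KK$-rational intersection point of adjacent ones---does not suffice. When $\Xi\supset\SSS_3$ the centraliser condition forces the Galois image $\Gamma$ into the centre $\langle\sigma\rangle$, but $\sigma$ swaps \emph{opposite} (hence disjoint) lines of the hexagon, so no Galois-fixed line and no Galois-fixed intersection point appears; when $\Xi\cong\mumu_6$ one has $\Gamma\subset\mumu_6$, and again the hexagon combinatorics alone yield no $\KK$-point. The paper's argument is of a different nature. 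For $\Xi\supset\SSS_3$ it uses the three $\KK$-rational contractions $S\to C_i\times C_j$ to products of conics: the Brauer relation $[C_1]+[C_2]+[C_3]=0$, combined with the transitive $\Aut(S)$-permutation of the three contractions, forces $C_1\cong C_2\cong C_3\cong\PP^1$, whence a $\KK$-point. The case $\Xi\cong\mumu_6$ is harder still: the contractions to products of conics and to Severi--Brauer surfaces exist only after degree-$3$ and degree-$2$ extensions, producing points of degree $3$ and $2$ on $S$; one then needs a Coray-type input (the paper's Corollary~\ref{corollary:dP6-deg-2-3}, proved via weak del Pezzo surfaces of degree~$4$ and their anticanonical models) to combine these into a $\KK$-point. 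This Brauer-group and low-degree-point machinery is the ingredient your $d=6$ sketch is missing.
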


\begin{remark}
We remind the reader that a del Pezzo surface of degree $1$, $5$, or $7$
always has a point over the base field
(see Theorem~\ref{theorem:dP-with-points} below).
\end{remark}

In certain cases we also obtain bounds for Jordan constants for automorphism groups of minimal
del Pezzo surfaces, which turn out to be stronger than the bounds without minimality assumption,
see Lemmas~\ref{lemma:dP6} and~\ref{lemma:dP8-minimal},
and Examples~\ref{example:dP8-minimal}
and~\ref{example:dP8-minimal-alternative}.
Also, we make the following observation about groups of birational
automorphisms of
minimal del Pezzo surfaces of low degree.

\begin{proposition}\label{proposition:main}
Let $\KK$ be a field of characteristic zero, and let $S$ be a minimal del Pezzo
surface of degree $d$ over $\KK$. Suppose that
$S$ has no $\KK$-points.
The following assertions hold.
\begin{itemize}
\item[(i)] If $d=2$, then $J(\Bir(S))\le 168$.

\item[(ii)] If $d=3$, then the group $\Bir(S)$ is abelian.
\end{itemize}
Furthermore, the bound from assertion~(i)
is attained over a suitable field~$\KK$.
\end{proposition}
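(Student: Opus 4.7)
The plan is to reduce both assertions to statements about $\Aut(S)$ that are already established in the paper, via the identity $\Bir(S)=\Aut(S)$ under the hypotheses. The key step is therefore to show that $\Bir(S)=\Aut(S)$ whenever $S$ is a minimal del Pezzo surface of degree $d\in\{2,3\}$ over a field of characteristic zero with $S(\KK)=\emptyset$. This rests on the Iskovskikh--Manin theory of Sarkisov links for Mori fibre surfaces over an arbitrary perfect field: any birational self-map of a minimal del Pezzo surface of degree at most four factors through a sequence of elementary links, and the classification shows that for $d=2,3$ every such link begins by blowing up a $\KK$-rational point of $S$ (projection from a point for $d=3$, and the Geiser-type construction for $d=2$). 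Under our hypothesis there are no such points, so only the trivial links remain and $\Bir(S)=\Aut(S)$. For $d=2$ one additionally observes that the Bertini (covering) involution of the double cover $|-K_S|\colon S\to\PP^2$ is already biregular on a minimal $S$, and thus contributes no elements to $\Bir(S)\setminus\Aut(S)$.

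Granting this reduction, assertion~(i) follows from Theorem~\ref{theorem:main}(i), which gives $J(\Bir(S))=J(\Aut(S))\le 168$. For attainment I would invoke Theorem~\ref{theorem:Vikulova}, which supplies a minimal del Pezzo surface $S_0$ of degree~$2$ with $J(\Aut(S_0))=168$; the underlying construction in \cite{Vikulova-dP} yields a pointless example (consistent with the general upper bound of $168$ in the pointless case from Theorem~\ref{theorem:main}(i)), so the reduction applies and gives $J(\Bir(S_0))=168$. Assertion~(ii) follows analogously: the reduction combined with Theorem~\ref{theorem:cubics} gives that $\Bir(S)=\Aut(S)$ is abelian for every smooth cubic surface $S$ without $\KK$-points.

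The main obstacle is the reduction step. Iskovskikh's link classification is normally stated over an algebraically closed field, and the descent to arbitrary $\KK$ of characteristic zero requires careful bookkeeping of Galois orbits of exceptional curves and of indeterminacy loci of the map. For $d=2,3$, however, the short list of admissible links makes this descent manageable: each non-trivial link starts with the blow-up of a single $\KK$-rational point, and this is precisely the input ruled out by the assumption $S(\KK)=\emptyset$. Once this is in place, the rest of the proposition is immediate from the main theorem and from Theorems~\ref{theorem:Vikulova} and~\ref{theorem:cubics}.
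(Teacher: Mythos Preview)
Your overall strategy matches the paper's: reduce to $\Bir(S)=\Aut(S)$ and then invoke Theorem~\ref{theorem:main}(i), Theorem~\ref{theorem:Vikulova}, and Theorem~\ref{theorem:cubics}. The paper does exactly this, except that it simply quotes \cite[Theorem~1.6(ii)]{Iskovskikh96} (which is stated over an arbitrary perfect field, so your worry about descent from the algebraically closed case is unnecessary): a minimal del Pezzo surface of degree $d$ with no closed points of degree less than $d$ satisfies $\Bir(S)=\Aut(S)$.

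There is, however, a genuine gap in your treatment of $d=3$. Your assertion that for $d=3$ ``every such link begins by blowing up a $\KK$-rational point'' is not correct: in Iskovskikh's classification a minimal cubic also admits a Type~II link that blows up a closed point of degree~$2$, passes through a del Pezzo surface of degree~$1$ (via the Bertini involution there), and blows down back to a cubic. Thus the absence of $\KK$-points alone does not immediately kill all links when $d=3$. The paper supplies the missing observation: a smooth cubic surface with no $\KK$-points has no points of degree~$2$ either, since the line through a Galois-conjugate pair would be defined over $\KK$ and would meet the cubic in a residual $\KK$-point. With that in hand, the hypothesis ``no points of degree $<d$'' of Iskovskikh's theorem is satisfied for $d=3$ as well, and the reduction goes through. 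For $d=2$ your claim is correct: the only Sarkisov link from a minimal degree-$2$ del Pezzo blows up a single $\KK$-point (the intermediate surface has degree~$1$, and the relevant involution is Bertini, not Geiser), so the absence of $\KK$-points suffices.
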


It would be interesting to estimate Jordan constants
for automorphism groups and birational automorphism groups of minimal del Pezzo surfaces
of other degrees without points.

\smallskip
The plan of the paper is as follows.
In Section~\ref{section:preliminaries} we collect some useful auxiliary facts
about points on del Pezzo surfaces over algebraically non-closed fields.
In Sections~\ref{section:dP4}, \ref{section:dP6}, and~\ref{section:dP8}
we estimate the Jordan constants of automorphism
groups of del Pezzo surfaces of degrees~$4$, $6$, and~$8$,
respectively. In Section~\ref{section:results} we
complete the proofs of Theorem~\ref{theorem:main}
and Proposition~\ref{proposition:main}.

\smallskip
\textbf{Notation and conventions.}
We denote by $\mumu_n$ the cyclic group of order $n$,
by $\SSS_n$ the symmetric group on $n$ letters,
and by $\AAA_n$ the alternating group on $n$ letters.

Given a field $\KK$, we denote by $\bar{\KK}$ its algebraic closure.
For a variety $X$ defined over $\KK$, we denote by $X_{\bar{\KK}}$
its scalar extension to~$\bar{\KK}$.
By a point of degree $d$ on a variety defined over~$\KK$ we mean a closed point whose
residue field is an extension of~$\KK$ of degree~$d$;
a $\KK$-point is a point of degree~$1$.

A del Pezzo surface is a smooth projective surface with an ample anticanonical class.
For a del Pezzo surface $S$, by its degree we mean its (anti)canonical degree~$K_S^2$.

\smallskip
\textbf{Acknowledgements.}
We are grateful to Andrey Trepalin for many useful discussions.

\section{Preliminaries}
\label{section:preliminaries}

In this section we collect some well known auxiliary facts
concerning points on certain del Pezzo surfaces. The first fact is a general
theorem of Lang and Nishimura.

\begin{theorem}[{see e.g.~\cite[Lemma~1.1]{VA}}]
\label{theorem:Lang-Nishimura}
Let $X$ and $Y$ be smooth projective varieties over a field $\KK$.
Suppose that $X$ is birational to~$Y$. Then $X$ has a $\KK$-point if and only if~$Y$ has a $\KK$-point.
\end{theorem}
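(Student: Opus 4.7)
The plan is to deduce the biconditional from the one-sided Lang--Nishimura lemma: if $\phi\colon X\dashrightarrow Y$ is a rational map from a smooth projective variety $X$ to a projective variety $Y$ over $\KK$, and if $X(\KK)\ne\emptyset$, then $Y(\KK)\ne\emptyset$. Applying this lemma to a given birational map $X\dashrightarrow Y$ and to its inverse will then yield the theorem, so the task reduces to establishing the one-sided version.

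I would prove the one-sided statement by induction on $n=\dim X$. The base case $n=0$ is immediate, since an irreducible $0$-dimensional variety with a $\KK$-point is $\operatorname{Spec}\KK$, and any rational map from it is regular. For the inductive step, fix $x\in X(\KK)$. If $\phi$ happens to be regular at $x$, then $\phi(x)$ is already the desired $\KK$-point of~$Y$. Otherwise, I would blow up $x$: let $\sigma\colon\tilde X\to X$ be the blowup, with exceptional divisor $E\cong\PP^{n-1}_\KK$, and consider the composition $\psi=\phi\circ\sigma\colon\tilde X\dashrightarrow Y$. The restriction $\psi|_E\colon E\dashrightarrow Y$ is then a rational map from a smooth projective variety of dimension $n-1$ carrying a $\KK$-point (in fact many), so the inductive hypothesis applied to $\psi|_E$ produces a $\KK$-point of~$Y$.

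The main obstacle, and the only substantive input, is the step asserting that $\psi|_E$ is a well-defined rational map, i.e.\ that $E$ is not contained in the indeterminacy locus of $\psi$. This follows from the classical fact that a rational map from a smooth variety to a proper variety has indeterminacy locus of codimension at least~$2$: at any codimension-$1$ point of $\tilde X$ the local ring is a DVR by smoothness, so the valuative criterion of properness applied to $Y\to\operatorname{Spec}\KK$ extends $\psi$ there. Once this codimension bound is granted, the induction is entirely formal and the theorem follows.
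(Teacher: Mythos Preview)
The paper does not supply a proof of this statement; it is quoted as a known result with a reference to~\cite[Lemma~1.1]{VA}. Your argument is correct and is in fact the classical proof of the Lang--Nishimura lemma (induction on $\dim X$ via blowing up a $\KK$-point and restricting to the exceptional divisor), which is exactly what one finds in the cited source. There is nothing to compare against here.
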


\begin{theorem}
\label{theorem:dP-with-points}
Let $S$ be a del Pezzo
surface of degree $d$ over a field~$\KK$.
Suppose that either $d\in\{1,5,7\}$, or $d=8$ and $S_{\bar{\KK}}\not\cong\PP^1\times\PP^1$.
Then $S$ has a $\KK$-point.
\end{theorem}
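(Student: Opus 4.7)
The plan is to treat each of the four cases $d \in \{1, 5, 7, 8\}$ separately, in each case finding a Galois-invariant geometric object that forces the existence of a $\KK$-point. For $d = 1$, Riemann--Roch gives $h^0(S, -K_S) = 2$, so $|-K_S|$ is a pencil; since $(-K_S)^2 = 1$, two general members meet in a single geometric point, so the pencil has a unique base point. This base point is fixed by $\Gal(\bar{\KK}/\KK)$ and hence defined over $\KK$.

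For $d = 8$ with $S_{\bar{\KK}} \not\cong \PP^1 \times \PP^1$, the surface $S_{\bar{\KK}}$ is the blow-up of $\PP^2$ at one point, so it carries a unique $(-1)$-curve $E$, which is therefore $\Gal(\bar{\KK}/\KK)$-invariant and defined over $\KK$. Being geometrically irreducible, $E$ satisfies $H^0(E, \OOO_E) = \KK$, so Castelnuovo contraction (performed over $\bar{\KK}$ and descended using Galois invariance) sends $E$ to a single $\KK$-rational smooth point of a surface $Y$ over $\KK$, and Theorem~\ref{theorem:Lang-Nishimura} then yields a $\KK$-point on $S$. For $d = 7$, the same strategy applies: among the three $(-1)$-curves of $S_{\bar{\KK}}$, the ``middle'' one is uniquely characterised as the unique $(-1)$-curve meeting two others, hence defined over $\KK$; contracting it produces a $\KK$-point of the image surface, which transfers back to $S$ via Theorem~\ref{theorem:Lang-Nishimura}.

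For $d = 5$, the statement is the classical theorem of Enriques and Swinnerton-Dyer, which I would invoke as known. A direct geometric proof studies the $\Gal(\bar{\KK}/\KK)$-orbits on the ten $(-1)$-curves of $S_{\bar{\KK}}$, whose intersection graph is the Petersen graph with combinatorial automorphism group $\SSS_5$, and in every case exhibits a small Galois-invariant configuration that can be contracted to force a $\KK$-point. The main obstacle of the whole theorem is precisely this degree: unlike $d \in \{1, 7, 8\}$, there is no canonical base point and no distinguished $(-1)$-curve forced by the combinatorics alone, so the argument requires a genuine case analysis on the Galois action (or appeal to the classical literature).
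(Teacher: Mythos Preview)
Your proof is correct and follows essentially the same approach as the paper: the base point of $|-K_S|$ for $d=1$, a citation of the classical Enriques result for $d=5$, and a Galois-invariant $(-1)$-curve for $d\in\{7,8\}$. The only minor difference is in extracting a $\KK$-point from the $(-1)$-curve: the paper observes directly that a $(-1)$-curve is a line in the anticanonical embedding (hence isomorphic to $\PP^1_\KK$ and full of $\KK$-points), whereas you contract it and invoke Theorem~\ref{theorem:Lang-Nishimura}, which is slightly less direct but equally valid.
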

\begin{proof}
If $d=1$, then the unique base point of the anticanonical linear system $|-K_S|$ is defined over $\KK$.
If $d=5$, then $S$ has a $\KK$-point by
a result going back to Enriques, see for instance \cite[Theorem~2.5]{VA}.
If either $d=7$, or $d=8$ and $S_{\bar{\KK}}\not\cong\PP^1\times\PP^1$, then
$S$ contains a $(-1)$-curve which is defined over $\KK$.
A $(-1)$-curve is a line in the anticanonical embedding of $S$,
so that this line (and hence the surface $S$) contains a $\KK$-point.
\end{proof}

The following theorem is not necessary for the proofs of our results;
however, we mention it to introduce its analogs which
will actually be used.

\begin{theorem}[{see \cite{Coray}}]
\label{theorem:Coray}
Let $S$ be a del Pezzo
surface of degree $d\ge 4$ over a perfect field~$\KK$.
Suppose that $S$ has a point of degree coprime to $d$.
Then $S$ has a $\KK$-point.
\end{theorem}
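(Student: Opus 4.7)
The plan is to proceed by a case analysis on the degree $d$. By Theorem~\ref{theorem:dP-with-points}, the assertion already holds unconditionally when $d\in\{5,7\}$ or when $d=8$ with $S_{\bar{\KK}}\not\cong\PP^1\times\PP^1$, so the remaining cases are $d=9$, $d=8$ with $S_{\bar{\KK}}\cong\PP^1\times\PP^1$, $d=4$, and $d=6$. For each of these I would exploit the specific geometric description of del Pezzo surfaces of that degree in order to reduce to classical splitting results for simpler forms.

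The cases $d=9$ and $d=8$ should be essentially immediate. For $d=9$ the surface $S$ is a Severi--Brauer surface, and a point of degree coprime to $9$ is in particular of degree coprime to the index~$3$, so Ch\^atelet's splitting theorem gives $S\cong\PP^2_{\KK}$ and hence a $\KK$-point. For $d=8$ with $S_{\bar{\KK}}\cong\PP^1\times\PP^1$ the classification of such forms over a perfect field says that $S$ is either a product of two conics $C_1\times C_2$, or the Weil restriction $R_{L/\KK}(C)$ of a conic along a quadratic extension $L/\KK$; in either case a point of odd degree (i.e.\ coprime to $8$) pushes forward to a point of odd degree on a conic defined over~$\KK$ or~$L$, and Springer's theorem on quadratic forms produces a $\KK$- or $L$-rational point on that conic, hence a $\KK$-point on $S$.

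For $d=4$ the surface $S$ is a smooth intersection of two quadrics $Q_1,Q_2\subset\PP^4$. I would consider the pencil $\{Q_1+tQ_2\}$ as a quadric $Q$ in $\PP^4_{\KK(t)}$ containing $S_{\KK(t)}$. A point of odd degree on $S$ base-changes to a point of odd degree on $Q$, so by Springer's theorem applied over $\KK(t)$ the quadric $Q$ is isotropic, i.e.\ the form $q_1+tq_2$ has a nontrivial zero over $\KK(t)$. The Amer--Brumer theorem then converts this into a common nontrivial zero of $q_1$ and $q_2$ over $\KK$, which is exactly a $\KK$-point on $S$.

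The main obstacle is $d=6$. Here $S$ can be described as a torsor under a two-dimensional algebraic torus, and its splitting is governed by two associated Severi--Brauer surfaces $B_1,B_2$ (arising from the two Galois orbits of exceptional $(-1)$-curves on $S_{\bar{\KK}}$) together with three associated conics coming from the conic bundle structures on $S$; a classical theorem essentially due to Manin and Colliot--Th\'el\`ene--Sansuc says that $S$ has a $\KK$-point if and only if all of these auxiliary varieties do. A point of degree coprime to $6$ on $S$ induces points of controlled degree on each $B_i$ and on each associated conic; since the relevant Brauer classes have orders dividing $3$ and $2$ respectively, Ch\^atelet's and Springer's theorems then split each of them. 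The delicate step I would expect to require the most care is tracking exactly what the induced degrees are and verifying that coprimality with $6$ on $S$ really does force the required coprimality on each of the five associated varieties simultaneously.
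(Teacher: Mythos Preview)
The paper gives no proof of this theorem: Theorem~\ref{theorem:Coray} is stated with a bare citation to~\cite{Coray}, and the sentence preceding it explicitly says the result is ``not necessary for the proofs of our results'' and is included only to motivate the analogs in Lemmas~\ref{lemma:singular-dP-4} and~\ref{lemma:weak-dP-4}. So there is nothing in the paper to compare your argument against.

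On the substance of your outline: the cases $d\in\{5,7,8,9\}$ and $d=4$ are correct and standard (Ch\^atelet for Severi--Brauer varieties, Springer for conics and for the Weil-restriction case, Amer--Brumer combined with Springer over $\KK(t)$ for the pencil of quadrics). The genuine soft spot is $d=6$. Your description via ``two associated Severi--Brauer surfaces $B_1,B_2$'' and ``three associated conics'' over $\KK$ is not accurate as written: the two blow-downs to Severi--Brauer surfaces and the three conic-bundle structures are in general defined only over the quadratic, respectively cubic, \'etale $\KK$-algebra that trivialises the relevant part of the Galois action on the hexagon, not over $\KK$ itself. Consequently the sentence ``a point of degree coprime to $6$ on $S$ induces points of controlled degree on each $B_i$'' does not yet make sense over $\KK$, and this is exactly the step you would need to justify. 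One clean way to close the gap is to note that the six intersection points of adjacent $(-1)$-curves form a Galois-invariant $0$-cycle of degree~$6$; together with your closed point of degree coprime to~$6$ this yields a $0$-cycle of degree~$1$, and then the Colliot-Th\'el\`ene--Sansuc theorem on smooth compactifications of torsors under tori produces a $\KK$-point. Alternatively you can base-change to the quadratic and cubic \'etale algebras and run your Ch\^atelet and Springer arguments there, but then you must track degrees through those base changes and descend back to~$\KK$, which is precisely the ``delicate step'' you flagged.
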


In some cases, a statement similar to Theorem~\ref{theorem:Coray}
holds for singular analogs of del Pezzo surfaces.

\begin{lemma}[{cf. the proof of \cite[Lemma~2.4]{Shramov-Cubics}}]
\label{lemma:singular-dP-4}
Let $S\subset\PP^4$ be a complete intersection of two quadrics
over an infinite perfect field~$\KK$.
Suppose that $S$ has isolated singularities.
Suppose that~$S$
has a point of degree~$3$ over~$\KK$. Then $S$ has a smooth point defined over~$\KK$.
\end{lemma}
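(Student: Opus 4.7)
The plan is to run a case analysis on the $\KK$-rational linear span $\Lambda\subset\PP^4$ of the closed point $P$. Since $\Lambda$ is Galois-stable and contains the three geometric points of $P$, it is defined over $\KK$ and has dimension $1$ or $2$.

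I would first treat the main case $\dim\Lambda=2$. Restricting the two quadrics cutting out $S$ to $\Lambda\cong\PP^2$ yields two conics, so $\Lambda\cap S$ is either a curve or a $0$-dimensional scheme of length at most~$4$. If it is a curve, it is a $\KK$-rational line, conic, or pair of lines containing~$P$; in each subcase one finds infinitely many $\KK$-points (for a smooth conic, one uses that a Severi--Brauer conic trivialized by an odd-degree extension is split, via the standard corestriction argument in the Brauer group), and since $\KK$ is infinite and $\mathrm{Sing}(S)$ is finite, one of these is smooth on $S$. If instead $\Lambda\cap S$ is $0$-dimensional of length~$4$, then after removing the length-$3$ subscheme $P$ the residual has length $1$, giving a $\KK$-rational point $R$ on $S$; if $R$ is smooth we are done.

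The remaining subcase, $R\in\mathrm{Sing}(S)$, is the crux of the argument. Here I would look at the pencil of hyperplanes $H_t\subset\PP^4$ containing $\Lambda$, a $\PP^1_{\KK}$-pencil defined over $\KK$. For a general member $H_t$, Bertini applied to $S$ (using that $\KK$ is infinite) shows that $C_t:=H_t\cap S\subset H_t\cong\PP^3$ is a geometrically integral curve of arithmetic genus~$1$ and degree~$4$, smooth outside the base locus $\Lambda\cap S$, not tangent to $S$ at the three geometric points of $P$, and with a unique singularity at~$R$. Its normalization $\tilde C_t$ is then a smooth geometrically integral curve of geometric genus~$0$ carrying a $\KK$-rational effective divisor of degree~$3$ lifted from~$P$, so $\tilde C_t\cong\PP^1_\KK$. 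Choosing a $\KK$-point of $\tilde C_t$ outside the finite preimage of~$R$ (using $\KK$ infinite once more) and pushing it down to $C_t\subset S$ yields the desired smooth $\KK$-point of $S$.

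The remaining case $\dim\Lambda=1$ is analogous and easier: either $\Lambda\subset S$, in which case $\Lambda\cong\PP^1_\KK$ contains infinitely many $\KK$-points and all but finitely many are smooth on~$S$; or $\Lambda\not\subset S$, and then $\Lambda\cap S$ has length at most~$4$ and contains $P$, with residual a $\KK$-rational point treated exactly as above. The main obstacle I expect is verifying in the crucial subcase that a general $H_t$ through $\Lambda$ really produces a geometrically integral $C_t$ with a single mild singularity at~$R$, so that the normalization step delivers a $\KK$-point not lying above~$R$, rather than a degenerate pencil where every $C_t$ acquires extra singularities.
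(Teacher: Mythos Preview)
Your case-split on $\dim\Lambda$ and your treatment of the one-dimensional intersection match the paper's proof. The divergence is in the $0$-dimensional case: you separate ``$R$ smooth'' from ``$R\in\mathrm{Sing}(S)$'' and put all the work into the latter, whereas the paper shows directly that $R$ is \emph{automatically} smooth on $S$, so your crucial subcase never occurs. The argument is short: by B\'ezout in $\Pi\cong\PP^2$ the restricted conics $C_1=Q_1|_\Pi$ and $C_2=Q_2|_\Pi$ meet in total multiplicity $4$; the three Galois-conjugate points $P_i$ must carry equal local multiplicities, so each contributes exactly $1$, and hence so does the residual point $R$. Multiplicity $1$ at $R$ forces $C_1$ and $C_2$ to be smooth and transverse there, so $dQ_1$ and $dQ_2$ are already linearly independent as functionals on $T_R\Pi$, hence independent in $T^*_R\PP^4$, i.e.\ $S$ is smooth at $R$. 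This sidesteps exactly the obstacle you flagged yourself---controlling geometric integrality and the singularity of a general member of a \emph{pencil}, where Bertini's irreducibility statement does not apply directly and where you have not yet argued that $S$ is geometrically irreducible. (Similarly, your ``$\Lambda\not\subset S$'' branch in the $\dim\Lambda=1$ case is vacuous: a line not contained in a quadric meets it in length at most $2$, so three points on $\Lambda\cap S$ already force $\Lambda\subset S$, as the paper observes.)
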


\begin{proof}
Since $S$ is a complete intersection with isolated singularities, it follows
from Hartshorne's connectedness theorem that
$S$ is irreducible, see e.g.~\mbox{\cite[Theorem~18.12]{Eisenbud}}.

Let $P_1$, $P_2$, and $P_3$ be the three points of $S_{\bar{\KK}}$ which form a $\Gal(\bar{\KK}/\KK)$-orbit.
Suppose that the points $P_1$, $P_2$, and $P_3$ are collinear, and
consider the line~$L$
in~$\PP^4$ passing through them. Then $L$
is $\Gal(\bar{\KK}/\KK)$-invariant and thus defined over~$\KK$.
Since it has at least three common points with the surface $S$,
and $S$ is an intersection of quadrics in $\PP^4$,
we conclude that $L$ is contained in $S$.
Since $L$ has an infinite number of $\KK$-points, and the singularities of $S$
are isolated, we see that $S$ has a smooth $\KK$-point.

Therefore, we can assume that the points $P_1$, $P_2$, and $P_3$ are not collinear.
Let $\Pi$ be the plane in $\PP^4$ passing through
these three points. Then $\Pi$
is $\Gal(\bar{\KK}/\KK)$-invariant and thus defined over $\KK$.
Note that $\Pi$ is not contained in $S$, because $S$ is irreducible.
Thus, the intersection $\Lambda=S\cap\Pi$ coincides with an intersection of two curves $C_1, C_2\subset \Pi$
of degree $2$ (defined over~$\bar{\KK}$).

Suppose that $\Lambda$ is one-dimensional.
Then the union of its one-dimensional irreducible components is either a line~$L$,
or a pair of lines $L_1\cup L_2$ (each defined over $\bar{\KK}$),
or a geometrically irreducible conic $C$.
In the first case, the line $L$
is defined over $\KK$, and thus contains
an infinite number of $\KK$-points.
In the other two cases, $\Lambda$ coincides with the curve $L_1\cup L_2$ or $C$, respectively;
in other words, $\Lambda$ has no zero-dimensional irreducible components.
If $\Lambda=L_1\cup L_2$, then none of the points $P_1$, $P_2$, and $P_3$
can coincide with the intersection point $L_1\cap L_2$, because the latter is defined over $\KK$.
Thus, one of the lines $L_1$ or $L_2$ contains at least two of the points $P_i$, while the other
contains at most one. Hence the lines $L_1$ and $L_2$ are defined over $\KK$,
so that $S$ contains an infinite number of $\KK$-points.
If $\Lambda=C$ is a geometrically irreducible conic, then $C$ is defined over $\KK$
and has a point of degree $3$ over $\KK$. This implies that $C$ has a $\KK$-point.
Therefore, one has $C\cong\PP^1$, which means that $C$ (and hence $S$) contains an infinite number of
$\KK$-points. In each of the three cases, we conclude that $S$ has a smooth $\KK$-point, because the
singularities of $S$ are isolated.

Finally, suppose that $\Lambda$ is zero-dimensional.
Then $|\Lambda_{\bar{\KK}}|\le 4$. This implies that the curves $C_1$ and $C_2$ are smooth
at each of the points $P_i$, and intersect each other transversally at these points. Hence
there exists the fourth point $P_4\in \Lambda_{\bar{\KK}}$ different from $P_1$, $P_2$, and~$P_3$; furthermore,
the curves $C_1$ and $C_2$ are smooth and
intersect each other transversally at~$P_4$. So we see that $P_4$ is a smooth point of $S$ defined over~$\KK$.
\end{proof}

Recall that a weak del Pezzo surface is a smooth projective surface
such that its anticanonical class is nef and big. The degree of a weak del Pezzo surface is
defined as the square of its (anti)canonical class.

\begin{lemma}
\label{lemma:weak-dP-4}
Let $S$ be a weak del Pezzo surface of degree $4$ over an infinite perfect field~$\KK$. Suppose that~$S$
has a point of degree~$3$ over~$\KK$. Then $S$ has a $\KK$-point.
\end{lemma}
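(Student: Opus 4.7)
The plan is to reduce to Lemma~\ref{lemma:singular-dP-4} by passing to the anticanonical model. Since $-K_S$ is big and nef and $K_S^2=4$, the anticanonical linear system defines a morphism $\phi\colon S\to S'\subset\PP^4$ onto a (possibly singular) del Pezzo surface of degree~$4$, realized as a complete intersection of two quadrics in $\PP^4$ with Du Val (hence isolated) singularities; the morphism $\phi$ is the minimal resolution of~$S'$ and, as $S'$ is normal, restricts to an isomorphism over the smooth locus of~$S'$. Let $Z$ be the given point of degree~$3$ on~$S$ and set $Z'=\phi(Z)$; since the residue field of $Z'$ embeds into that of $Z$, one has $\deg Z'\in\{1,3\}$.

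If $\deg Z'=3$, Lemma~\ref{lemma:singular-dP-4} applies directly to~$S'$ and yields a smooth $\KK$-point of~$S'$, which pulls back to a $\KK$-point of~$S$. If $\deg Z'=1$ and $Z'$ is a smooth point of~$S'$, the preimage $\phi^{-1}(Z')$ is already a $\KK$-point of~$S$. The remaining case is that $Z'$ is a singular $\KK$-point of~$S'$; then the exceptional fiber $E=\phi^{-1}(Z')$ is a $\KK$-defined connected tree of $(-2)$-curves whose dual graph over $\bar{\KK}$ is of ADE type, and the three Galois-conjugate geometric points of~$Z$ all lie on $E_{\bar{\KK}}$.

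To finish, I would observe that since Galois acts transitively on these three geometric points, they lie either all on a single geometric component $C$ of $E_{\bar{\KK}}$ (which forces $C$ to be Galois-invariant, hence defined over~$\KK$), or on three distinct components forming a single Galois orbit of length three. In the first subcase, $C$ is a smooth geometrically rational curve over~$\KK$---that is, a conic---carrying a closed point of degree~$3$, which forces $C\cong\PP^1_\KK$. In the second subcase the dual ADE graph admits an automorphism of order~$3$ with an orbit of size three on vertices, and a direct inspection of the symmetries of $A_n$, $D_n$, $E_6$, $E_7$, $E_8$ shows that this happens only for the $D_4$ diagram; then the three permuted components are forced to be the outer nodes of~$D_4$, while the central component~$C_0$ is Galois-fixed, hence defined over~$\KK$, and contains the Galois orbit of length three formed by its intersections with the outer nodes, so again $C_0\cong\PP^1_\KK$. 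In both situations a $\KK$-defined $(-2)$-curve on~$S$ is isomorphic to $\PP^1_\KK$ and therefore contains a $\KK$-point of~$S$.

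The main obstacle is precisely this singular-image subcase: Theorem~\ref{theorem:Lang-Nishimura} does not apply directly because $S'$ is not smooth at~$Z'$. It is overcome by combining the rigidity of the ADE Dynkin diagrams---only $D_4$ admits a $3$-cycle orbit on its vertex set---with the standard fact that a smooth genus-zero curve over~$\KK$ carrying a closed point of odd degree must be isomorphic to $\PP^1_\KK$.
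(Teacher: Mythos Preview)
Your argument is correct and follows the same overall strategy as the paper: pass to the anticanonical model $S'\subset\PP^4$, invoke Lemma~\ref{lemma:singular-dP-4} when the image of the degree-$3$ point still has degree~$3$, and analyze the ADE exceptional fiber when the image collapses to a singular $\KK$-point. The only real difference is how the ADE case is organized. The paper first restricts to the Dynkin types that can actually appear on a weak del Pezzo of degree~$4$ (namely $A_n$ with $n\le 5$, $D_4$, $D_5$) and handles them type by type: for $A_n$ with even~$n$ it does not use the points $P_i$ at all but simply notes that the middle intersection point of the chain is Galois-fixed, while for $A_n$ with odd~$n$ and $D_5$ it uses $|\Aut(\Gamma^\vee)|\le 2$ to force a Galois-invariant component carrying the $P_i$. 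You instead run a uniform dichotomy---either all three $P_i$ lie on one component, or they lie on three components forming a single Galois orbit---and then isolate $D_4$ as the only diagram admitting a vertex orbit of size~$3$. Your decomposition is a bit cleaner and works verbatim for every ADE type; the paper's version is more hands-on but sidesteps having to justify the dichotomy.
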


\begin{proof}
Consider the anticanonical morphism
$$
\psi\colon S\to \PP^4,
$$
and denote its image by $S'$. Then
$\psi$ is a birational
morphism of $S$ to $S'$, and $S'$ is a surface with Du Val singularities.
In particular, the singularities of $S'$ are isolated. Furthermore,~$S'$ is a complete
intersection of two quadrics in~$\PP^4$.

Suppose that the points $P_1$, $P_2$, and $P_3$ are
mapped to one point $R\in S'$ by $\psi$. Then~$R$ is defined over $\KK$, and
$\Gamma=\psi^{-1}(R)$ is a configuration of $(-2)$-curves with the action
of the Galois group $\Gal(\bar{\KK}/\KK)$. Hence the dual graph $\Gamma^\vee$ of $\Gamma$ is a Dynkin diagram
of one of the types $\mathrm{A}_n$ with $n\le 5$, $\mathrm{D}_4$, or $\mathrm{D}_5$,
see~\mbox{\cite[\S8.6.3]{Dolgachev}}.
If $\Gamma^\vee$ is of type $\mathrm{A}_n$ with even $n$, then $\Gamma$, and hence $S$, contains a 
$\Gal(\bar{\KK}/\KK)$-invariant point, i.e. a $\KK$-point.
If $\Gamma^\vee$ is either of type $\mathrm{A}_n$ with odd $n$, or of type~$\mathrm{D}_5$, then 
$\Aut(\Gamma^\vee)\subset \mumu_2$; since $\Gamma$ contains a $\Gal(\bar{\KK}/\KK)$-orbit~\mbox{$\{P_1, P_2, P_3\}$} of length $3$, we see that
there exists an irreducible component $C$ of $\Gamma$ which is $\Gal(\bar{\KK}/\KK)$-invariant
and contains an odd number of points $P_i$. Since $C_{\bar{\KK}}\cong\PP^1$, this implies that $C$,
and thus also $S$, has a $\KK$-point.
If $\Gamma^\vee\cong \mathrm{D}_4$, then the ``central'' irreducible component~$C$ of~$\Gamma$
is $\Gal(\bar{\KK}/\KK)$-invariant, and contains a $\Gal(\bar{\KK}/\KK)$-invariant subset of cardinality~$3$
formed by the intersections with other irreducible components of~$\Gamma$.
Since~\mbox{$C_{\bar{\KK}}\cong\PP^1$}, we again
conclude that $C$ and $S$ have $\KK$-points.

Therefore, we may assume that $\psi$ maps the points $P_1$, $P_2$, and $P_3$
to different points of~$S'_{\bar{\KK}}$, which means that $S'$ has a point of degree $3$ over $\KK$.
Hence $S'$ contains a smooth $\KK$-point $P$ by
Lemma~\ref{lemma:singular-dP-4}. Since the fiber
$\psi^{-1}(P)_{\bar{\KK}}\subset S_{\bar{\KK}}$ over $P$ consists of one point,
this point is defined over~$\KK$.
\end{proof}

\begin{corollary}\label{corollary:dP6-deg-2-3}
Let $S$ be a del Pezzo surface of degree $6$ over an infinite perfect field $\KK$. Suppose that~$S$
has a point of degree $2$ and a point of degree $3$ over $\KK$. Then $S$ has a $\KK$-point.
\end{corollary}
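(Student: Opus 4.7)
The strategy is to blow up the degree-$2$ point to produce a weak del Pezzo surface of degree~$4$, apply Lemma~\ref{lemma:weak-dP-4}, and dispose of the exceptional case by a contraction. Write $P$ for the degree-$2$ point and $Q$ for the degree-$3$ point; since their residue field degrees are coprime, $P$ and $Q$ are distinct closed points with disjoint geometric orbits. Let $\pi\colon\tilde{S}\to S$ be the blowup of~$P$, so that $K_{\tilde{S}}^2=K_S^2-2=4$ and the preimage of $Q$ is a closed point of degree~$3$ on~$\tilde{S}$.

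The first task is to decide when $\tilde{S}$ is a weak del Pezzo surface of degree~$4$. For the strict transform of an irreducible curve $C\subset S_{\bar{\KK}}$ with multiplicities $m',m''$ at the two geometric points of~$P$, one computes
\[
\tilde{C}\cdot(-K_{\tilde{S}})=C\cdot(-K_S)-m'-m''.
\]
In the anticanonical embedding $S_{\bar{\KK}}\hookrightarrow\PP^6$, the curve $C$ has degree $C\cdot(-K_S)$. Intersecting with a hyperplane that contains the line $\ell\subset\PP^6$ through the two geometric points of~$P$ but does not contain $C$, one obtains $m'+m''\le C\cdot(-K_S)$, unless $\ell\subseteq C$. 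This exception forces $C$ itself to coincide with $\ell$, i.e.\ to be a $(-1)$-curve on $S_{\bar{\KK}}$ passing through both geometric points of~$P$. Hence $-K_{\tilde{S}}$ is nef if and only if no $(-1)$-curve on $S_{\bar{\KK}}$ contains both geometric points of~$P$.

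If no such $(-1)$-curve exists, then $\tilde{S}$ is a weak del Pezzo surface of degree~$4$ over the infinite perfect field $\KK$ carrying a point of degree~$3$, so Lemma~\ref{lemma:weak-dP-4} yields a $\KK$-point on~$\tilde{S}$, which Theorem~\ref{theorem:Lang-Nishimura} transports back to~$S$.

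Otherwise, let $L\subset S_{\bar{\KK}}$ be a $(-1)$-curve through both geometric points of~$P$. Any two distinct $(-1)$-curves on a del Pezzo surface of degree~$6$ meet in at most one point, so $L$ is unique, hence $\Gal(\bar{\KK}/\KK)$-invariant and defined over~$\KK$. Contracting $L$ yields a smooth projective surface $S'$ with $K_{S'}^2=7$, which geometrically is the blowup of~$\PP^2$ at two distinct points, hence carries no $(-2)$-curves and is a del Pezzo surface of degree~$7$. Theorem~\ref{theorem:dP-with-points} then supplies a $\KK$-point on~$S'$, lifted back to~$S$ via Lang--Nishimura. The main technical point is the nef analysis via Bezout: isolating the $(-1)$-curve obstruction precisely, so that the exceptional case can be handled by a single contraction.
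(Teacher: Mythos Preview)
Your proof is correct and follows essentially the same route as the paper: blow up the degree-$2$ point, invoke Lemma~\ref{lemma:weak-dP-4} for the resulting weak del Pezzo of degree~$4$, and treat separately the case where both geometric points of~$P$ lie on a single $(-1)$-curve. The only cosmetic differences are that you justify nefness of $-K_{\tilde S}$ via an explicit B\'ezout-type argument (the paper simply asserts it), and in the exceptional case you contract the Galois-stable $(-1)$-curve to a degree-$7$ del Pezzo and quote Theorem~\ref{theorem:dP-with-points}, whereas the paper observes directly that this curve, being defined over~$\KK$, already furnishes a $\KK$-point.
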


\begin{proof}
By assumption, the surface $S_{\bar{\KK}}$ contains a $\Gal(\bar{\KK}/\KK)$-orbit
$\{P_1,P_2\}$ of length~$2$.
Suppose that $P_1$ (and thus also $P_2$) is contained in a $(-1)$-curve.
Recall that the dual graph of the configuration of $(-1)$-curves on $S_{\bar{\KK}}$
is a hexagon. If both $P_1$ and $P_2$ are contained in the same $(-1)$-curve, then this
$(-1)$-curve is unique and thus defined over~$\KK$; hence $S$ has a $\KK$-point.
Therefore, we can assume that each $(-1)$-curve on $S_{\bar{\KK}}$ contains at most one point among $P_1$ and $P_2$.
Thus the blow up of $\{P_1,P_2\}$ is a weak del Pezzo surface $\tilde{S}$ of degree $4$ over $\KK$.
It follows from Theorem~\ref{theorem:Lang-Nishimura} that $\tilde{S}$ has a point of degree $3$.
Hence $\tilde{S}$ has a $\KK$-point by Lemma~\ref{lemma:weak-dP-4}.
This implies that $S$ has a $\KK$-point.
\end{proof}

\section{Del Pezzo surfaces of degree $4$}
\label{section:dP4}

In this section we find an estimate for Jordan constants
of automorphism groups of del Pezzo surfaces of degree $4$
without points. First of all, let us introduce some notation.

Let $S$ be a del Pezzo surface of degree $4$ over a field $\KK$ of characteristic zero.
Then~$S_{\bar{\KK}}$ is isomorphic to a blow up of $\PP^2_{\bar{\KK}}$ at five points $P_1, \, P_2, \, P_3, \, P_4, \, P_5$ in general position. Denote by
$$
\pi \colon S_{\bar{\KK}} \to \mathbb{P}^2_{\bar{\KK}}
$$
this blow up.  Denote by
$$
E_1, \ E_2, \ E_3, \ E_4, \ E_5
$$
the preimages under
the morphism $\pi$ of $P_1$, $P_2$, $P_3$, $P_4$, and~$P_5,$  respectively.
Denote by~$Q$  the proper transform under the morphism~$\pi$ of the (smooth)
conic passing through all the points~$P_1,$~$P_2,$~$P_3,$~$P_4$, and~$P_5$.
Denote by $L_{ij}$
for $i,j \in \{1,2,3,4,5\}$, $i\neq j$,
the proper transform under~$\pi$  of the line  passing through~$P_i$ and~$P_j$
(here the index $ij$ is considered to be non-ordered). 
Then~$Q$,~$L_{ij}$,  and~$E_i$ give a set of  $16$ lines on~$S_{\bar{\KK}}$;
these are all the lines contained in~$S_{\bar{\KK}}$.
Their intersections are:
\begin{gather*}
E_i^2=Q^2=L_{ij}^2=-1;  \\
E_i \cdot E_j=0 \quad \text{for} \quad  i \neq j;  \\
E_i \cdot Q=L_{ij} \cdot E_i=L_{ij} \cdot Q=1 \quad \text{for} \quad  i \neq j;  \\
L_{ij} \cdot L_{kl}=1,  \quad \text{where} \;  i,j,k,l \in \{1,\ldots,5\} \; \text{are pairwise distinct;}  \\
L_{ij} \cdot L_{jk}=0, \quad \text{where} \;  i,j,k \in \{1,\ldots,5\} \; \text{are pairwise distinct}.
\end{gather*}

The Weyl group
$$
W(\mathrm{D}_5) \cong \mumu_2^4 \rtimes \mathfrak{S}_5
$$
acts on the Picard group~$\mathrm{Pic}(S_{\bar{\KK}})$ by orthogonal transformations with respect to the intersection form. This action fixes the canonical class
$K_{S_{\bar{\KK}}}$ and preserves the cone of effective divisors. The
subgroup $\mathfrak{S}_5 \subset W(\mathrm{D}_5)$ acts on the set of lines on $S_{\bar{\KK}}$ by permutation of indices.
The normal subgroup $\mumu_2^4 \subset W(\mathrm{D}_5)$
consists of two types of involutions~$\iota_{ab}$ and~$\iota_{abcd},$
where $\{a,b\}$ and $\{a,b,c,d\}$ are subsets of cardinality $2$ and $4$
in the set $\{1,2,3,4,5\}$, respectively (thus, the indices $ab$ and $abcd$
are considered to be non-ordered). These involutions act on the set of lines as follows:
\begin{gather*}
\iota_{ab}(E_a)=E_b;\\
% \quad \text{and} \quad
\iota_{ab}(E_c)=L_{de}, \quad \text{where} \quad
\{a,b,c,d,e\}=\{1,2,3,4,5\};\\
% \quad \text{and} \quad d<e;\\
\iota_{abcd}(L_{ab})=L_{cd};\\
\iota_{abcd}(E_a)=L_{ae} \quad \text{and} \quad \iota_{abcd}(E_e)=Q,
\quad \text{where} \quad \{a,b,c,d,e\}=\{1,2,3,4,5\};
\end{gather*}
we refer the reader to \cite[\S6.4]{DI} for details.
Furthermore, $W(\mathrm{D}_5)$ is the group of \emph{all} automorphisms of the Picard group $\mathrm{Pic}(S_{\bar{\KK}})$
preserving the intersection form and the canonical class, see~\cite[Theorem~23.9]{Manin-CubicForms}.
The action on the lines on $S_{\bar{\KK}}$ provides homomorphisms of the absolute Galois group~$\Gal(\bar{\KK}/\KK)$
and the automorphism group~\mbox{$\Aut(S)$}
to $W(\mathrm{D}_5)$, and the latter homomorphism is injective,
see~\mbox{\cite[Corollary 8.2.40]{Dolgachev}}.
Observe that the images of $\Gal(\bar{\KK}/\KK)$ and $\Aut(S)$ in~$W(\mathrm{D}_5)$ commute with each other.

Let us record some properties of the elements of the Weyl group $W(\mathrm{D}_5)$
which we will need in this section.

\begin{lemma}\label{lemma:WD5}
The following assertions hold.

\begin{enumerate}
\renewcommand\labelenumi{\rm (\roman{enumi})}
\renewcommand\theenumi{\rm (\roman{enumi})}

\item\label{lemma:WD55}
There is a unique conjugacy class of elements of order $5$ in~$W(\mathrm{D}_5)$.
%Let $g\in W(\mathrm{D}_5)$ be an element of order $5$. Then the centralizer of
%$g$ in $W(\mathrm{D}_5)$ coincides with the cyclic group generated by $g$.

\item\label{lemma:WD53}   There is a unique conjugacy class of elements of order $3$ in~$W(\mathrm{D}_5)$.

\item\label{lemma:WD5-1234}
Let $\rho\colon W(\mathrm{D}_5)\to \mathfrak{S}_5$
be the natural projection. Let $g \in W(\mathrm{D}_5)$ be an element such that
$\rho(g)$ has order $4$.
Then $g$ is conjugate either to $(1234)\in\mathfrak{S}_5$, or to~\mbox{$(1234)\iota_{15}$}.

\item\label{lemma:WD5involutions}    Let $G\subset W(\mathrm{D}_5)$ be a subgroup containing the elements
$\iota_{12}$ and~\mbox{$(1234)$}.
Then the centralizer of $G$ is contained in the group
generated by~$\iota_{1234}$.

\item\label{lemma:WD5involutions15}
Let $G\subset W(\mathrm{D}_5)$ be a subgroup containing the elements
$\iota_{12}$ and~\mbox{$(1234)\iota_{15}$}.
Then the centralizer of $G$ is contained in the group
generated by~$\iota_{1234}$.
\end{enumerate}
\end{lemma}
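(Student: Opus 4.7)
The plan is to exploit the semidirect product structure $W(\mathrm{D}_5) = \mumu_2^4 \rtimes \SSS_5$, writing elements as pairs $(v,\sigma)$ with $v \in \mumu_2^4$ (viewed additively as an $\mathbb{F}_2$-vector space of dimension $4$) and $\sigma \in \SSS_5$. The governing identity is
$$
(u,\tau)(v,\sigma)(u,\tau)^{-1} = \bigl(u + \tau(v) + (\tau\sigma\tau^{-1})(u),\, \tau\sigma\tau^{-1}\bigr).
$$
Consequently, fixing the $\SSS_5$-component $\sigma$, conjugating $(v,\sigma)$ within elements having the same second component changes $v$ by some $(1+\sigma)(u)$, and we may also act by the centralizer of $\sigma$ in $\SSS_5$. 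The key computational input is thus the image and kernel of $1+\sigma$ on the $\mathbb{F}_2[\langle\sigma\rangle]$-module $\mumu_2^4$, together with the $\SSS_5$-centralizer of $\sigma$.

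For \ref{lemma:WD55} and \ref{lemma:WD53}, an element of order $5$ (respectively $3$) must project to a $5$-cycle (respectively $3$-cycle); such permutations form a single class in $\SSS_5$, so we normalize $\sigma = (12345)$ or $\sigma = (123)$. For the $5$-cycle, the fixed subspace of $\sigma$ on $\mathbb{F}_2^5$ is the diagonal line, which meets $\mumu_2^4$ trivially since its coordinate sum is nonzero; hence $1+\sigma$ is injective and thus bijective on the finite space $\mumu_2^4$, so every order-$5$ element with given $\sigma$-part is conjugate to $(0,\sigma)$. For the $3$-cycle, the order-$3$ condition reads $v \in \ker(1+\sigma+\sigma^2)$, and a direct calculation gives $\dim\ker(1+\sigma+\sigma^2) = 2 = \dim\ker(1+\sigma)$ on $\mumu_2^4$. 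Since $(1+\sigma+\sigma^2)(1+\sigma) = 1+\sigma^3 = 0$ in characteristic two, the image of $1+\sigma$ is contained in $\ker(1+\sigma+\sigma^2)$, and by dimension count it equals it; thus every admissible $v$ is conjugate to $0$.

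For \ref{lemma:WD5-1234}, set $\sigma = (1234)$. The kernel of $1+\sigma$ on $\mumu_2^4$ is one-dimensional, spanned by $\iota_{1234}$, so the image $V$ is a subspace of codimension $1$; explicitly $V$ consists of elements supported on $\{1,2,3,4\}$ with even weight. The centralizer of $\sigma$ in $\SSS_5$ is $\langle\sigma\rangle$, and the quotient $\mumu_2^4/V \cong \mumu_2$ has trivial automorphism group, so $\langle\sigma\rangle$ acts trivially on this quotient. Therefore the conjugacy classes in $W(\mathrm{D}_5)$ of elements projecting to a $4$-cycle biject with the two cosets of $V$ in $\mumu_2^4$; these are represented by $(0,\sigma)$ and $(\iota_{15},\sigma)$, i.e.\ by $(1234)$ and $(1234)\iota_{15}$.

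For \ref{lemma:WD5involutions} and \ref{lemma:WD5involutions15}, write a centralizing element as $h=(w,\theta)$. Commutation with $(1234)$, or with $(1234)\iota_{15}$, forces $\theta$ to centralize $(1234)$, so $\theta \in \langle(1234)\rangle$, and yields a constraint $(1+(1234))(w) = c_\theta$, where $c_\theta$ is $0$ in case \ref{lemma:WD5involutions} and $(1+\theta)(\iota_{25})$ in case \ref{lemma:WD5involutions15}. Commutation with $\iota_{12}$ then forces $\theta(\iota_{12}) = \iota_{12}$, meaning $\theta$ preserves $\{1,2\}$ as a set; inspecting $\langle(1234)\rangle$ shows this leaves only $\theta = e$. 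Substituting $\theta = e$ reduces $c_\theta$ to $0$ in both cases, so $w \in \ker(1+(1234)) = \{0,\iota_{1234}\}$ and $h \in \langle\iota_{1234}\rangle$. The main obstacle is purely bookkeeping: keeping straight the semidirect-product multiplication order, and carrying the $\iota_{15}$-twist through the commutation equations in \ref{lemma:WD5involutions15} without losing track of which factor it combines with.
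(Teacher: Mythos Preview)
Your proof is correct and entirely self-contained. The paper handles parts \ref{lemma:WD55} and \ref{lemma:WD53} by the one-line observation that $|W(\mathrm{D}_5)|=2^7\cdot 3\cdot 5$ together with Sylow's theorem (tacitly using that in $\SSS_5$ all $5$-cycles, respectively $3$-cycles, form a single conjugacy class), and for part \ref{lemma:WD5-1234} it simply cites a table in \cite{Trepalin-dPfinitefield}. You instead treat all three parts uniformly via the $\mathbb{F}_2$-linear algebra of $1+\sigma$ on $\mumu_2^4$: this is longer but gives an explicit, reference-free argument, and in particular your proof of \ref{lemma:WD5-1234} (computing $\operatorname{im}(1+(1234))$ as the codimension-one subspace supported on $\{1,2,3,4\}$ and noting the centralizer $\langle(1234)\rangle$ fixes the quotient) supplies what the paper only outsources. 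For parts \ref{lemma:WD5involutions} and \ref{lemma:WD5involutions15} the two arguments are essentially the same computation, but yours is organized more systematically through the semidirect-product coordinates, while the paper checks by hand which elements of the form $\sigma\iota_{ab}$ or $\sigma\iota_{abcd}$ commute with the generators. Your careful conversion $(1234)\iota_{15}=(\iota_{25},(1234))$ in the $(v,\sigma)$ convention, leading to $c_\theta=(1+\theta)(\iota_{25})$, is exactly the right bookkeeping and collapses to $0$ once $\theta=e$ is forced.
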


\begin{proof}
Assertions~\ref{lemma:WD55} and~\ref{lemma:WD53}
immediately follows from Sylow theorem, because
$$
|W(\mathrm{D}_5)|=2^7\cdot 3\cdot 5.
$$
Assertion~\ref{lemma:WD5-1234} can be found
in~\mbox{\cite[Table 4]{Trepalin-dPfinitefield}}.

Let us prove assertion~\ref{lemma:WD5involutions}.
The centralizer of the element $(1234)$ is generated by $(1234)$
and~$\iota_{1234}$.
Observe that $\iota_{1234}$ commutes with $\iota_{12}$. On the other hand,
one can see that the element $(1234)^k$, $0\le k\le 3$,
commutes with $\iota_{12}$ if and only if $k=0$.
This means that the centralizer of $G$ is contained in the group
generated by~$\iota_{1234}$.

Let us prove assertion~\ref{lemma:WD5involutions15}. We have to find
all the elements $\alpha=\sigma \iota_{ab}$ and $\alpha=\sigma \iota_{abcd}$,
where~\mbox{$\sigma \in \mathfrak{S}_5$},
such that $\alpha$ commutes with $\iota_{12}$ and $(1234)\iota_{15}$.
First of all, note that
$$
\sigma=(1234)^k
$$
for some $0\le k\le 3$,
because these are all the elements of $\mathfrak{S}_5$ which centralize
the image of $G$ in $\mathfrak{S}_5$.
Since $\alpha$ commutes with $\iota_{12}$, we see that $k=0$.
Hence~\mbox{$\alpha=\iota_{ab}$} or~\mbox{$\alpha=\iota_{abcd}$}.
Now the condition that
$\alpha$ commutes with $(1234)\iota_{15}$ implies that either $\alpha$ is
trivial, or~\mbox{$\alpha=\iota_{1234}$}.
\end{proof}

\begin{lemma}\label{lemma:iota12}
Let $\rho\colon W(\mathrm{D}_5)\to \mathfrak{S}_5$
be the natural projection. Let $H \subset \mathrm{Ker}(\rho)$ be a group of order at least $8.$ Assume that the element $(1234) \in W(\mathrm{D}_5)$ normalizes $H.$ Then $\iota_{12} \in H.$
\end{lemma}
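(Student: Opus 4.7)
The plan is to view $\mathrm{Ker}(\rho)\cong\mumu_2^4$ as a $4$-dimensional $\mathbb{F}_2$-vector space and, in the nontrivial case $|H|=8$, classify the hyperplanes stabilized by $(1234)$.

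Concretely, I would identify $\mathrm{Ker}(\rho)$ with the subspace
\[
V=\Bigl\{v\in\mathbb{F}_2^5\,:\, v_1+v_2+v_3+v_4+v_5=0\Bigr\}
\]
by sending $\iota_{ab}\mapsto e_a+e_b$ and $\iota_{abcd}\mapsto e_a+e_b+e_c+e_d$. Under this identification, the conjugation action of $\mathfrak{S}_5\subset W(\mathrm{D}_5)$ on $\mathrm{Ker}(\rho)$ becomes the permutation of coordinates, so $(1234)$ acts on $V$ as the $4$-cycle on $e_1,e_2,e_3,e_4$ that fixes $e_5$. This correspondence is a direct check using the explicit action formulas for the $\iota$'s recorded earlier.

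Since $|V|=16$, either $H=V$, in which case $\iota_{12}\in H$ is immediate, or $H$ is an $\mathbb{F}_2$-hyperplane in $V$, i.e., the kernel of a functional of the form $\phi_T(v)=\sum_{i\in T}v_i$ for some nonempty proper subset $T\subseteq\{1,\ldots,5\}$; note that $\phi_T$ and $\phi_{T'}$ agree on $V$ precisely when $T'=T$ or $T'=\{1,\ldots,5\}\setminus T$. The hypothesis that $(1234)$ normalizes $H$ then translates into $(1234)(T)\in\{T,T^c\}$, and since $|T|+|T^c|=5$ these two sets have different sizes, so $(1234)(T)=T$. The $(1234)$-orbits on $\{1,\ldots,5\}$ are $\{1,2,3,4\}$ and $\{5\}$, so up to complementation the only admissible $T$ is $\{5\}$. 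Therefore $H=\{v\in V:v_5=0\}$, which visibly contains $\iota_{12}=e_1+e_2$.

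The argument reduces to linear algebra over $\mathbb{F}_2$, and I do not anticipate a serious obstacle. The one place requiring care is to confirm that the conjugation action of $(1234)$ on $\mathrm{Ker}(\rho)$ coming from the semidirect product structure really does coincide with the coordinate permutation on $V$, which is exactly where the explicit description of $W(\mathrm{D}_5)$ given above in the paper is used.
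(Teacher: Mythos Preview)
Your proof is correct and takes a genuinely different route from the paper's. The paper argues by contradiction and direct computation: assuming $\iota_{12}\notin H$, conjugation by powers of $(1234)$ excludes $\iota_{12},\iota_{23},\iota_{34},\iota_{14}$ from $H$; a short closure argument then excludes all $\iota_{a5}$ as well, leaving at most seven nontrivial elements available, which forces $\iota_{1234},\iota_{1235}\in H$ and hence the contradiction $\iota_{45}=\iota_{1234}\iota_{1235}\in H$. Your approach instead linearizes the problem over $\mathbb{F}_2$ via the standard model $V=\{v\in\mathbb{F}_2^5:\sum v_i=0\}$, reduces to the case $|H|=8$, and classifies the $(1234)$-invariant hyperplanes using the parity obstruction $|T|\neq|T^c|$; this yields the stronger conclusion that the unique such hyperplane is $\{v_5=0\}$, hence $\iota_{12}\in H$. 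Your argument is cleaner and more structural, and pins down $H$ exactly in the $|H|=8$ case; the paper's is more hands-on but requires no setup beyond the multiplication rules for the $\iota$'s. The one point you flag---that conjugation by $(1234)$ really is the coordinate permutation on $V$---is indeed the only thing to verify, and it follows from the fact (stated in the paper) that $\mathfrak{S}_5$ acts on the lines by permuting indices, so $\sigma\iota_{ab}\sigma^{-1}=\iota_{\sigma(a)\sigma(b)}$.
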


\begin{proof}
Assume the contrary. Then
$$
(1234)^k \iota_{12} (1234)^{-k} \notin H
$$
for $0\le k \le 3$.
This means that $\iota_{12}, \iota_{23}, \iota_{34}, \iota_{14} \notin H$.

Let us show that $\iota_{a5} \notin H$ for any $a \in \{1,2,3,4\}$.
Since $(1234)$ normalizes $H$, it is enough to show that
$\iota_{15}\notin H$. Suppose that this is not the case.
Then $\iota_{15}\iota_{12}=\iota_{25}\notin H$,
since $\iota_{12} \notin H$.
However, one has
$$
(1234)^{-1} \iota_{25} (1234)=\iota_{15},
$$
which gives a contradiction.

Since the order of $H$ is at least $8$, and
$$
\iota_{12}, \iota_{23}, \iota_{34}, \iota_{14},
\iota_{15}, \iota_{25}, \iota_{35}, \iota_{45} \notin H,
$$
we see that both $\iota_{1234}$ and $\iota_{1235}$ are contained
in $H$. However, one has
$\iota_{1234}\iota_{1235}=\iota_{45} \notin H$. The obtained contradiction
shows that $\iota_{12} \in H$.
\end{proof}

We will need the following simple group-theoretic lemma.

\begin{lemma}\label{lemma:small-2-group}
Let $G$ be a group which fits into an exact sequence
$$
1\to H\to G\to F\to 1,
$$
where $F\cong\mumu_4$ and $H\cong\mumu_2^2$.
Then $J(G)\le 2$.
\end{lemma}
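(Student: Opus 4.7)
The plan is to exploit the fact that $|G|=|H|\cdot|F|=16$, so that $G$ is a finite $2$-group and we only have to control its subgroups of orders $1$, $2$, $4$, $8$, and $16$. Subgroups of order at most $4$ are abelian, and any group of order $8$ is either abelian or isomorphic to the dihedral group $D_8$ or the quaternion group $Q_8$; both of the latter contain a cyclic (hence abelian) subgroup of index $2$, which is automatically normal. So it suffices to exhibit a normal abelian subgroup of index at most $2$ in $G$ itself.

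To produce such a subgroup, I would analyze the conjugation action
\[
\psi\colon F=G/H\longrightarrow \Aut(H)\cong \GL_2(\mathbf{F}_2)\cong \mathfrak{S}_3.
\]
Since $|F|=4$ and $|\mathfrak{S}_3|=6$, the order of $\psi(F)$ divides $\gcd(4,6)=2$, so there are exactly two cases.

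If $\psi$ is trivial, then $H$ lies in the centre of $G$. Picking a lift $g\in G$ of a generator of $F$, every element of $G$ has the form $hg^a$ with $h\in H$; since $g$ commutes with $H$, two such elements commute, and $G$ itself is abelian. If instead $\psi$ has image of order $2$, let $G_0\subset G$ be the preimage of $\ker(\psi)$. Then $G_0$ is normal of index $2$ in $G$, contains $H$ as a central subgroup, and satisfies $G_0/H\cong \mumu_2$. Choosing any $g_0\in G_0\setminus H$, the same argument as in the trivial case shows that $G_0=\langle g_0\rangle\cdot H$ is abelian, giving the desired normal abelian subgroup of index $2$ in $G$.

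There is no real obstacle here: the whole argument is driven by the elementary observation that $\gcd(|F|,|\Aut(H)|)=2$, and the rest is a short case analysis together with the standard classification of groups of order $\le 8$. The only thing to be careful about is that Jordan property is a statement about all finite subgroups, so one must not forget to handle the proper subgroups separately, which is immediate from the remarks in the first paragraph.
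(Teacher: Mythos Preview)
Your proof is correct and follows essentially the same route as the paper: both arguments observe that the conjugation map $F\to\Aut(H)\cong\mathfrak{S}_3$ must kill the unique order-$2$ element of $F\cong\mumu_4$, so that the preimage of this element together with $H$ generates a normal abelian subgroup of index~$2$ in $G$. Your separate treatment of proper subgroups in the first paragraph is harmless but unnecessary, since once $G$ has a normal abelian subgroup $A$ of index at most~$2$, every subgroup $G'\le G$ automatically has the normal abelian subgroup $G'\cap A$ of index at most~$2$.
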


\begin{proof}
Since $H$ is abelian,
the action of (any) preimages of elements of $F$ on $H$ by conjugation provides a homomorphism
$$
\phi\colon F\to\Aut(H)\cong\mathfrak{S}_3.
$$
Since $F\cong \mumu_4$, the (unique) element $g\in F$ of order $2$ is
contained in the kernel of~$\phi$. Let~\mbox{$\tilde{g}\in G$} be some preimage of $g$.
We conclude that $\tilde{g}$ commutes with $H$, and hence~$\tilde{g}$ and~$H$ generate an abelian subgroup $A$ in $G$.
The index of $A$ equals $2$, and thus~$A$ is normal.
Therefore, we have $J(G)\le 2$.
\end{proof}

The next lemma describes the action of an automorphism on the blow up of
a fixed point.

\begin{lemma}\label{lemma:3actiononP2}
Let $\KK=\bar{\KK}$ be an algebraically closed field of characteristic zero.
Let $\hat{S}$ be a smooth surface over $\KK$, and let $g\in\Aut(\hat{S})$
be an automorphism of finite order $n$. Let~$P$ be a fixed point of $g$, and
suppose that $g$ acts as $\mathrm{diag}(\xi^m, \xi^k)$ on the Zariski tangent
space~\mbox{$T_P(\hat{S})\cong\KK^2$} to $\hat{S}$ at $P$,
where $\xi$ is a primitive $n$-th root of unity,
and~\mbox{$m\not\equiv k\pmod n$}.
Let
$$
\pi\colon S\to \hat{S}
$$
be the blow up of $\hat{S}$ at $P$, and let $E$
be its exceptional divisor.
Then under the natural action of $g$ on $S$ there are exactly two
$g$-fixed points
$R_1$ and $R_2$ on $E$, the action of $g$ on~$T_{R_1}(S)$ is given by
$\mathrm{diag}(\xi^{k-m}, \xi^{m})$, and
the action of $g$ on~$T_{R_2}(S)$ is given
by~\mbox{$\mathrm{diag}(\xi^{m-k}, \xi^{k})$}.
\end{lemma}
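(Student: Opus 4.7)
The plan is to reduce the statement to a completely local computation in two standard affine charts of the blow-up, after choosing coordinates at $P$ in which $g$ acts linearly.

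First I would linearize $g$ near $P$. Since $g$ is of finite order $n$, $\KK$ has characteristic zero, and $P$ is a fixed point, a standard averaging argument (applied to the $g$-representation on $\mathfrak{m}_P/\mathfrak{m}_P^2$ and lifted using the $\langle g\rangle$-equivariant splitting of $\mathfrak{m}_P\to\mathfrak{m}_P/\mathfrak{m}_P^2$) produces formal (equivalently, \'etale-local) coordinates $x,y$ in a neighborhood of $P$ on $\hat{S}$ such that
\[
g^*x=\xi^m x,\qquad g^*y=\xi^k y.
\]
Since the blow-up is a local construction, it suffices to describe $\pi$ and the action of $g$ in these coordinates.

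Next I would compute in the two standard charts covering $\pi^{-1}(P)$. In the first chart, with coordinates $(u,v)$ determined by $x=u$ and $y=uv$, the exceptional divisor $E$ is $\{u=0\}$, and
\[
g^*u=\xi^m u,\qquad g^*v=g^*(y/x)=\xi^{k-m} v.
\]
The origin $R_1=\{u=v=0\}$ lies on $E$, is fixed by $g$, and the induced action on $T_{R_1}(S)$ has weights $(\xi^m,\xi^{k-m})$, i.e.\ is given by $\mathrm{diag}(\xi^{k-m},\xi^{m})$ up to reordering the coordinates. In the second chart, with $(s,w)$ determined by $x=sw$ and $y=w$, the exceptional divisor is $\{w=0\}$ and one has $g^*s=\xi^{m-k}s$, $g^*w=\xi^k w$, yielding a second fixed point $R_2$ with tangent action $\mathrm{diag}(\xi^{m-k},\xi^k)$, as claimed.

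Finally, I would check that $R_1$ and $R_2$ are the only $g$-fixed points on $E$. The curve $E$ is canonically identified with $\PP(T_P(\hat{S}))$, and the action of $g$ on $E$ is the projectivization of its action on $T_P(\hat{S})$. Since the two eigenvalues $\xi^m,\xi^k$ of $g$ on $T_P(\hat{S})$ are distinct, because $m\not\equiv k\pmod n$, this projective action has exactly two fixed points, which must be $R_1$ and $R_2$. There is no real obstacle to overcome here; the only point requiring a brief justification is the linearization step, and the rest is a direct two-chart computation.
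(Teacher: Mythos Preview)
Your proposal is correct and follows essentially the same approach as the paper: linearize $g$ at $P$, then compute in the two standard affine charts of the blow-up, and identify the fixed points on $E$ via the projectivized tangent action. The only cosmetic difference is that the paper first reduces to $\KK=\CC$ and invokes analytic linearization (citing Akhiezer), whereas you use the formal/\'etale linearization via averaging directly over $\KK$; either justification is adequate here.
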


\begin{proof}
It is enough to prove the assertion in the case $\KK=\CC$.
Furthermore, there is an analytic neighborhood of $P$ in $\hat{S}$
which can be identified with an analytic neighborhood of~$0$ in~$T_P(\hat{S})$,
and this identification can be chosen equivariant with respect to the group
generated by $g$, see e.g.~\mbox{\cite[\S2.2]{Akhiezer}}.
Therefore, we may assume that
$\hat{S}\cong\mathbb{A}^2$, the point~$P$ is $0\in\mathbb{A}^2$,
and $g=\mathrm{diag}(\xi^m, \xi^k)$ is a linear automorphism.

Let $z$ and $t$ be coordinates on $\hat{S}\cong\mathbb{A}^2$
such that~\mbox{$g(z)=\xi^m z$} and $g(t)=\xi^k t$.
The surface~$S$ is given in $\PP^1 \times \hat{S}$ by
equation  $xt=yz$, where~$x$ and~$y$ are homogeneous coordinates
on~$\PP^1$. The exceptional divisor $E\cong\PP^1$ is identified
with the projectivization of $T_{P}(\hat{S})$,
and thus the action of $g$ on $E$ is
$$
g([x:y])=[\xi^m x: \xi^k y].
$$
Since~\mbox{$m\not\equiv k\pmod n$}, we see that
$g$ has exactly two fixed points on $E$, namely, \mbox{$R_1=[1:0]$}
and $R_2=[0:1]$.

A neighborhood of the point $R_1$ in $S$ is given by equation
\begin{equation*}%\label{eq:t=zy}
t=zy
\end{equation*}
in the affine space $\mathbb{A}^3$ with coordinates $y$, $z$, and $t$.
Thus the Zariski tangent space to $S$ at the point $R_1=(0,0,0)$
is identified with the subspace of $\mathbb{A}^3$
defined by equation~\mbox{$t=0$}.
The element $g$ acts on $y$ by multiplication
by~$\xi^{k-m}$. Hence on $T_{R_1}(S)$ the element $g$ acts
as $\mathrm{diag}(\xi^{k-m}, \xi^{m})$.
The assertion about the action on $T_{R_2}(S)$
is proved by a similar computation. 
\end{proof}

Next, we find sufficient conditions for existence of a point on a del Pezzo surface of degree~$4$
in terms of its automorphism group.

\begin{lemma}\label{lemma:dp4order5}
Let $\KK$ be a field of characteristic zero, and let $S$ be a del Pezzo surface of degree $4$ over $\KK$. Suppose that there is an element of order $5$ in $\mathrm{Aut}(S)$. Then there is a $\KK$-point on $S$.
\end{lemma}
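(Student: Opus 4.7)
The plan is to exhibit a $\KK$-rational $(-1)$-curve on $S$ and contract it to reduce the problem to the degree-$5$ case.

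Let $g\in\Aut(S)$ be an element of order $5$, and let $\bar g$ denote its image in $W(\mathrm{D}_5)$ under the injective homomorphism recalled above. By Lemma~\ref{lemma:WD5}\ref{lemma:WD55} the element $\bar g$ is conjugate in $W(\mathrm{D}_5)$ to the $5$-cycle $(12345)\in\SSS_5$. Inspecting the action of $(12345)$ on the $16$ lines of $S_{\bar\KK}$, one sees that it permutes $E_1,\ldots,E_5$ cyclically, splits the ten lines $L_{ij}$ into two orbits of length $5$, and fixes the conic $Q$; hence $\bar g$ itself has a unique fixed $(-1)$-curve on $S_{\bar\KK}$, which I continue to denote by~$Q$.

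The absolute Galois group $\Gal(\bar\KK/\KK)$ acts on $\Pic(S_{\bar\KK})$ through an element of $W(\mathrm{D}_5)$ that commutes with $\bar g$, so it permutes the set of $\bar g$-invariant $(-1)$-curves. Since this set consists only of $Q$, every Galois element must fix $Q$ as a subvariety, i.e., $Q$ is a $(-1)$-curve defined over $\KK$.

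Contracting $Q$ over $\KK$ produces a smooth projective surface $S'$ with $K_{S'}^2=5$, and a direct check (comparing the intersection of $-K_{S'}$ with an arbitrary curve $C\subset S'$ to the intersection of $-K_S$ with the strict transform of $C$ on $S$) shows that $S'$ is a del Pezzo surface of degree $5$ over $\KK$. By Theorem~\ref{theorem:dP-with-points} the surface $S'$ has a $\KK$-point, and then Theorem~\ref{theorem:Lang-Nishimura} furnishes a $\KK$-point on $S$.

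The main obstacle in this plan is the determination of the fixed set of $\bar g$ on the configuration of lines, i.e., verifying that $Q$ is the only $(-1)$-curve fixed by $\bar g$; this reduces to the routine computation of $(12345)$-orbits indicated above, and everything else is a direct application of the results recalled in the preliminaries.
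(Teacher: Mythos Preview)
Your proof is correct and follows essentially the same approach as the paper's: both identify the unique $(-1)$-curve $Q$ fixed by the order-$5$ element and conclude that it is Galois-invariant, hence defined over~$\KK$. The only difference is in the final step. The paper observes directly that a $(-1)$-curve $Q$ defined over $\KK$ already contains a $\KK$-point, since $-K_S\cdot Q=1$ gives $Q$ a $\KK$-rational divisor of degree~$1$, so $Q\cong\PP^1$ (this is precisely the argument used in Theorem~\ref{theorem:dP-with-points} for the degree~$7$ case). You instead contract $Q$ to a del Pezzo surface of degree~$5$ and appeal to Theorem~\ref{theorem:dP-with-points} together with Lang--Nishimura. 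Your route works, but the detour through degree~$5$ is unnecessary.
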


\begin{proof}
Let $g \in \mathrm{Aut}(S)$ be an element of order $5$.
By Lemma~\ref{lemma:WD5}\ref{lemma:WD55} we may assume that
$$
g=(12345)\in\mathfrak{S}_5.
$$
Thus, we see that $g$ preserves a unique line on $S_{\bar{\KK}}$. Hence this line is fixed by $\Gal(\bar{\KK}/\KK)$, and
so it is defined over $\KK$. This implies that $S$ has a $\KK$-point.
\end{proof}

\begin{lemma}[{cf.~\cite[proof of Lemma 5.5]{Trepalin-highdegree}}]
\label{lemma:dp4order3}
Let $\KK$ be a field of characteristic zero, and let $S$ be a del Pezzo surface of degree $4$ over $\KK$. Suppose that there is an element of order~$3$ in $\mathrm{Aut}(S)$. Then there is a $\KK$-point on $S$.
\end{lemma}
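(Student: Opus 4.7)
The plan is to combine Lemma~\ref{lemma:WD5}\ref{lemma:WD53} with an analysis of the fixed locus of $g$ on~$S_{\bar{\KK}}$. After conjugating the image of $g$ in~$W(\mathrm{D}_5)$, we may assume that $g$ acts on~$\Pic(S_{\bar{\KK}})$ via the permutation $(123)\in\SSS_5$. In particular, $g$ preserves the set $\{E_1,\ldots,E_5\}$ and therefore descends through~$\pi$ to an order-$3$ automorphism $\bar g\in\Aut(\PP^2_{\bar{\KK}})$ which fixes $P_4,P_5$ and cyclically permutes $P_1,P_2,P_3$. The first key claim is that $\bar g$ does not fix a line pointwise on~$\PP^2_{\bar{\KK}}$: if it did, then $\bar g$ would act as a nontrivial scalar transformation on the complement of that line, so the order-$3$ orbit $\{P_1,P_2,P_3\}$ would lie on a single line through the remaining isolated fixed point of $\bar g$, contradicting the general-position hypothesis for the del Pezzo surface~$S$.

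Consequently, the fixed locus of $\bar g$ on $\PP^2_{\bar{\KK}}$ consists of three isolated points $P_4,P_5,R$, with $R\notin\{P_1,\ldots,P_5\}$. Choosing coordinates in which $\bar g=\mathrm{diag}(1,\zeta,\zeta^2)$ for a primitive cube root of unity~$\zeta$, and in which $P_4,P_5,R$ are the three coordinate points, a short direct computation shows that $R$ lies neither on the line $\overline{P_4 P_5}$ nor on the (smooth) conic through~$P_1,\ldots,P_5$. The $g$-invariant $(-1)$-curves on $S_{\bar{\KK}}$ are exactly $E_4,E_5,L_{45},Q$, and they form a $4$-cycle in the incidence graph. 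Applying Lemma~\ref{lemma:3actiononP2} at $P_4$ and $P_5$, together with the observation that $g$ acts nontrivially on $L_{45}$ and on $Q$, one checks that $g$ has precisely five isolated fixed points on $S_{\bar{\KK}}$: the four corners of this $4$-cycle, and the unique lift $\tilde R\in S_{\bar{\KK}}$ of~$R$.

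Finally, since $g$ is defined over~$\KK$, the Galois group $\Gal(\bar{\KK}/\KK)$ commutes with $g$ and therefore preserves both $\Fix(g)$ and the set of $g$-invariant lines, together with the incidence between them. Each of the four corners of the $4$-cycle lies on two $g$-invariant lines, while $\tilde R$ lies on none; hence $\tilde R$ is the unique fixed point of $g$ lying on no $g$-invariant line. This characterization is Galois-equivariant, so $\tilde R$ is fixed by $\Gal(\bar{\KK}/\KK)$ and is therefore the required $\KK$-point of~$S$. The main conceptual step is ruling out the fixed-line alternative for $\bar g$; the rest reduces to explicit coordinate computations and general Galois-equivariance arguments.
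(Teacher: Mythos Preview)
Your proof is correct and follows essentially the same route as the paper: conjugate $g$ to $(123)$, descend to $\PP^2$, rule out the pointwise-fixed-line alternative via general position, identify the five isolated fixed points of $g$ on $S_{\bar{\KK}}$, and single out one of them Galois-equivariantly. The only difference is the invariant used in the last step---you distinguish $\tilde R$ by the fact that it lies on no $g$-invariant $(-1)$-curve, whereas the paper distinguishes it by the eigenvalue structure of the tangent-space action (non-scalar at $\tilde R$, scalar at the other four); both criteria are Galois-stable and pick out the same point.
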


\begin{proof}
Let $g \in \mathrm{Aut}(S)$ be an element of order $3$. Denote by $G$ the subgroup in~\mbox{$\Aut(S)$} generated by $g$.
By Lemma~\ref{lemma:WD5}\ref{lemma:WD53}  up to conjugation there is a unique element of order~$3$ in $W(\mathrm{D}_5)$.
So we can assume that $g=(123)\in\mathfrak{S}_5$. Thus, the blow down
$$
\pi \colon S_{\bar{\KK}} \to \mathbb{P}^2
$$
of the lines $E_1,$ $E_2,$ $E_3,$ $E_4$, and $E_5$ is $G$-equivariant.
The fixed locus of an element of order $3$ acting regularly on $\mathbb{P}^2$
can be either a union of a point and a line, or a union of three points.
If the fixed locus of $G$ on $\mathbb{P}^2$ is a union of a point and a line,
then we cannot blow up five points in general position,
because in this case any $G$-orbit of length $3$ lies on some line.

So the fixed locus of $G$ on $\mathbb{P}^2$ is a union of three points,
two of which are~\mbox{$P_4=\pi(E_4)$} and~\mbox{$P_5=\pi(E_5)$}.
Denote the other one by $P$.
The action of the group $G$ on $\PP^2$ comes from the projectivization of
the three-dimensional representation of $G$ which is a sum of three
pairwise non-isomorphic one-dimensional representations. Hence
the group $G$ acts on each of the Zariski tangent spaces to $\PP^2$
at the points $P_4,$ $P_5$,
and $P$ as~\mbox{$\mathrm{diag}(\omega, \omega^2)$},
where~$\omega$ is a non-trivial cube root of unity.
According to Lemma~\ref{lemma:3actiononP2},
each of the exceptional divisors~$E_4$ and~$E_5$ contains two
$G$-fixed points,
and $G$ acts on the Zariski tangent space to $S_{\bar{\KK}}$ at each of these
points as $\mathrm{diag}(\omega, \omega)$
or $\mathrm{diag}(\omega^2, \omega^2)$.

Therefore, there are exactly five $G$-fixed points on $S_{\bar{\KK}}$:
four of them are contained in~$E_4$ and~$E_5$,
and $g$ acts on the Zariski tangent spaces at these points
as $\mathrm{diag}(\omega, \omega)$
or~\mbox{$\mathrm{diag}(\omega^2, \omega^2)$};
the fifth point is
$R=\pi^{-1}(P)$, and $g$ acts on $T_R(S)$
as $\mathrm{diag}(\omega, \omega^2)$. Thus, the point~$R$
is $\Gal(\bar{\KK}/\KK)$-invariant,
and hence defined over $\KK$.
\end{proof}

\begin{remark}
For an alternative proof of Lemma~\ref{lemma:dp4order3}, one can show that an automorphism
of order $3$ has exactly five fixed points on~$S_{\bar{\KK}}$, so that
some of these points form a $\Gal(\bar{\KK}/\KK)$-orbit of odd length,
and then apply Theorem~\ref{theorem:Coray}.
\end{remark}

\begin{lemma}\label{lemma:dP4-2-group}
Let $\KK$ be a field of characteristic zero, and let $S$ be a del Pezzo surface of degree $4$ over $\KK$.
Let $\rho\colon W(\mathrm{D}_5)\to \mathfrak{S}_5$ be the natural projection, so that the automorphism
group~\mbox{$\Aut(S)\subset W(\mathrm{D}_5)$} fits into an exact sequence
$$
1\to H\longrightarrow \Aut(S) \stackrel{\rho}{\longrightarrow} F\to 1,
$$
where $F\subset \mathfrak{S}_5$ and $H\cong\mumu_2^k$ for some integer $0\le k\le 4$.
Suppose that $F\cong\mumu_4$ and $k\ge 3$. Then there is a $\KK$-point on~$S$.
\end{lemma}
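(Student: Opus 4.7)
The plan is to use the conjugacy information in Lemma~\ref{lemma:WD5} to pin down the possible elements of $\Aut(S)$ and then derive strong restrictions on the image of the Galois group in $W(\mathrm{D}_5)$, which will yield a $\KK$-point from the geometry of the $16$ lines.

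First I would reduce to a normal form. Since $F\cong\mumu_4$ and all elements of order $4$ in $\mathfrak{S}_5$ are conjugate, after conjugating the embedding $\Aut(S)\hookrightarrow W(\mathrm{D}_5)$ by a suitable element of $W(\mathrm{D}_5)$ we may assume $F=\langle(1234)\rangle$. Pick any $g\in\Aut(S)$ with $\rho(g)=(1234)$. By Lemma~\ref{lemma:WD5}\ref{lemma:WD5-1234}, $g$ is $W(\mathrm{D}_5)$-conjugate to either $(1234)$ or $(1234)\iota_{15}$, and the conjugating element may be chosen so that its image in $\mathfrak{S}_5$ centralizes $(1234)$, so this further conjugation preserves our normal form. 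Thus we may assume $g=(1234)$ or $g=(1234)\iota_{15}$.

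The next step is to locate $\iota_{12}$ inside $H$. Since $H$ is normal in $\Aut(S)$, the element $g$ normalizes $H$; and because $H\subset\ker(\rho)\cong\mumu_2^4$ is abelian, conjugation by the factor $\iota_{15}$ is trivial on $H$. Hence conjugation by $(1234)$ also normalizes $H$. Since $|H|\ge 8$, Lemma~\ref{lemma:iota12} gives $\iota_{12}\in H\subset\Aut(S)$. We therefore have $\iota_{12}$ and $g$ both in $\Aut(S)$, and then Lemma~\ref{lemma:WD5}\ref{lemma:WD5involutions} (in the case $g=(1234)$) or Lemma~\ref{lemma:WD5}\ref{lemma:WD5involutions15} (in the case $g=(1234)\iota_{15}$) forces the centralizer of $\Aut(S)$ in $W(\mathrm{D}_5)$ to lie in $\langle\iota_{1234}\rangle$. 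Because the image of $\Gal(\bar{\KK}/\KK)$ in $W(\mathrm{D}_5)$ commutes with $\Aut(S)$, this image is trivial or equal to $\langle\iota_{1234}\rangle$.

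Finally I would exhibit a $\KK$-point. If the Galois image is trivial there is nothing to prove. Otherwise it is generated by $\iota_{1234}$, whose action on the $16$ lines contains the orbit $\{E_1,L_{15}\}$ (recall $\iota_{1234}(E_1)=L_{15}$). Since $E_1\cdot L_{15}=1$, the two lines meet in a single point, which is then fixed by $\Gal(\bar{\KK}/\KK)$ and hence defined over~$\KK$. The main (mild) obstacle is just the bookkeeping in the reduction step, making sure the $W(\mathrm{D}_5)$-conjugations used to put $g$ into one of the two normal forms are compatible with the assumption $F=\langle(1234)\rangle$; everything else then follows mechanically from the lemmas already proved.
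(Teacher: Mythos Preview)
Your argument is correct and follows essentially the same route as the paper: conjugate so that a lift of the generator of $F$ is $(1234)$ or $(1234)\iota_{15}$, observe that in either case $(1234)$ normalizes $H$ (since $\iota_{15}$ lies in the abelian kernel), apply Lemma~\ref{lemma:iota12} to get $\iota_{12}\in H$, and then use Lemma~\ref{lemma:WD5}\ref{lemma:WD5involutions},\ref{lemma:WD5involutions15} to force the Galois image into $\langle\iota_{1234}\rangle$. The only cosmetic difference is your choice of Galois-invariant intersecting pair of lines: you use $\{E_1,L_{15}\}$, whereas the paper uses $\{L_{13},L_{24}\}$; both yield a $\KK$-point in the same way.
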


\begin{proof}
According to Lemma~\ref{lemma:WD5}\ref{lemma:WD5-1234},
up to conjugation we have either~\mbox{$(1234) \in \Aut(S)$},
or~\mbox{$(1234)\iota_{15} \in \Aut(S)$}.
Observe that both of these elements have the same action by conjugation on
$\mumu_2^4\subset W(\mathrm{D}_5)$; thus, in both cases $H$ is normalized
by the element $(1234)$.
So by Lemma~\ref{lemma:iota12} we have $\iota_{12} \in H$.
Now it follows from
Lemma~\ref{lemma:WD5}\ref{lemma:WD5involutions},\ref{lemma:WD5involutions15}
that the image of $\Gal(\bar{\KK}/\KK)$ in $W(\mathrm{D}_5)$
is contained in the group generated by the element $\iota_{1234}$.
Therefore, the pair of intersecting lines~$L_{13}$ and~$L_{24}$
is invariant under the action of the Galois group. This means that their intersection point is defined over~$\KK$.
\end{proof}

Now we are ready to prove the main result of this section.

\begin{lemma}\label{lemma:deg-4-bound}
Let $\KK$ be a field of characteristic zero, and let $S$ be a del Pezzo surface of degree $4$ over $\KK$.
Suppose that
$S$ has no $\KK$-points. Then $J(\Aut(S))\le 2$.
\end{lemma}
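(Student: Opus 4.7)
The plan is to reduce the problem to a $2$-group calculation and then carry out a short case analysis on the image $F$ of $\Aut(S)$ in $\mathfrak{S}_5$. First I would apply Lemmas~\ref{lemma:dp4order3} and~\ref{lemma:dp4order5}: since $S$ has no $\KK$-points, $\Aut(S)$ contains no element of order $3$ or~$5$. As $\Aut(S)\subset W(\mathrm{D}_5)$ and $|W(\mathrm{D}_5)|=2^7\cdot 3\cdot 5$, this forces $\Aut(S)$ to be a finite $2$-group, and consequently $J(\Aut(S))$ is simply the minimal index of an abelian normal subgroup of $\Aut(S)$ itself.

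Next I would work with the exact sequence $1 \to H \to \Aut(S) \to F \to 1$ introduced in Lemma~\ref{lemma:dP4-2-group}, with $H\subset\mumu_2^4$ and $F\subset\mathfrak{S}_5$ both $2$-groups. I claim that $F$ is cyclic of order at most~$4$: this can be read off from the classification of automorphism groups of degree-$4$ del Pezzo surfaces over~$\bar{\KK}$ (cf.\ Dolgachev's tables), since the image of $\Aut(S_{\bar{\KK}})$ in $\mathfrak{S}_5$ is contained in one of $\mumu_4$, $\mathfrak{S}_3$, or a dihedral group $D_{10}$, whose $2$-Sylow subgroups are all cyclic of order at most~$4$. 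Hence $F\in\{1,\mumu_2,\mumu_4\}$.

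Once $F$ is pinned down, the case analysis is quick. When $|F|\le 2$, the subgroup $H$ is itself abelian, normal, and of index at most~$2$ in $\Aut(S)$, so $J(\Aut(S))\le 2$ directly. When $F\cong\mumu_4$, Lemma~\ref{lemma:dP4-2-group} forces $|H|\le 4$: if $|H|=4$, Lemma~\ref{lemma:small-2-group} gives the desired bound, and if $|H|\le 2$, then $|\Aut(S)|\le 8$ and every $2$-group of order at most~$8$ contains an abelian subgroup of index at most~$2$, which is automatically normal.

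The hard part will be justifying the restriction $F\in\{1,\mumu_2,\mumu_4\}$ without appealing directly to the classification. A natural self-contained route is to extend the argument of Lemma~\ref{lemma:dP4-2-group}: since $D_4$ already contains a $4$-cycle, Lemma~\ref{lemma:iota12} together with the Galois-centralizer analysis of Lemma~\ref{lemma:WD5}\ref{lemma:WD5involutions}\ref{lemma:WD5involutions15} forces $|H|\le 4$ in the case $F\cong D_4$, so the residual difficulty is $F\cong\mumu_2^2$. There one would examine the conjugation action of $F$ on $H$: if some non-trivial involution $f\in F$ acts trivially on $H$, then $\langle H,\tilde f\rangle$ is an abelian normal subgroup of index~$2$; and if $F$ acts faithfully on $H$, then $|H|\ge 8$ (since $\operatorname{GL}_k(\mathbf{F}_2)$ has no Klein four subgroup for $k\le 2$), and one would have to produce a $\KK$-point on $S$ by an ad hoc analogue of Lemma~\ref{lemma:dP4-2-group} tailored to a Klein four subgroup of $\mathfrak{S}_5$.
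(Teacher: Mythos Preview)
Your proposal is correct and follows essentially the same route as the paper. Both arguments eliminate elements of order $3$ and $5$ via Lemmas~\ref{lemma:dp4order3} and~\ref{lemma:dp4order5}, reduce to the exact sequence with $H\subset\mumu_2^4$ and $F\subset\mathfrak{S}_5$, invoke Dolgachev's classification (the paper cites \cite[Theorem~8.6.8]{Dolgachev} directly) to conclude $F\in\{1,\mumu_2,\mumu_4\}$, and then dispatch the cases using Lemma~\ref{lemma:dP4-2-group} and Lemma~\ref{lemma:small-2-group}. The only cosmetic difference is in the subcase $F\cong\mumu_4$, $k\le 1$: the paper observes that $\Aut(S)$ is then a central extension of a cyclic group and hence abelian, whereas you note that any $2$-group of order at most $8$ has $J\le 2$; both are fine. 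Your final paragraph exploring how to avoid the classification is unnecessary---the paper simply cites Dolgachev, exactly as your main argument does---so you can drop that discussion entirely.
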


\begin{proof}
Since $S$ has no $\KK$-points, we know
from Lemmas~\ref{lemma:dp4order5} and~\ref{lemma:dp4order3} that $\mathrm{Aut}(S)$ does not contain elements of order $5$ and $3$.
So the group $\mathrm{Aut}(S)$ has to be a $2$-group. Consider the projection homomorphism $\rho \colon \mathrm{Aut}(S) \to \mathfrak{S}_5$,
and denote $F=\rho(\Aut(S))$. The kernel of $\rho$ is contained in the subgroup $\mumu_2^4\subset W(\mathrm{D}_5)$.
Thus, there is an exact sequence
$$
0 \to \mumu_2^k \longrightarrow \mathrm{Aut}(S) \stackrel{\rho}{\longrightarrow} F \to 0
$$
for some integer $0 \leqslant k \leqslant 4$.
By~\cite[Theorem~8.6.8]{Dolgachev},
the group $F$ is either trivial, or isomorphic to~$\mumu_2$, or isomorphic to~$\mumu_4$.
If $F$ is trivial, then $\Aut(S)$ is abelian, and hence~\mbox{$J(\Aut(S))=1$}. If~\mbox{$F\cong\mumu_2$}, then $J(\Aut(S)) \leqslant |F|=2$.

Therefore, we may assume that~\mbox{$F\cong\mumu_4$}.
According to Lemma~\ref{lemma:dP4-2-group}, this implies that~\mbox{$k\le 2$}, because $S$ has no $\KK$-points.
If $k=0$ or $1$, then $\Aut(S)$ is a central extension of a cyclic group; thus, it is abelian, so that~\mbox{$J(\Aut(S))=1$}.
Finally, if $k=2$, then~\mbox{$J(\Aut(S))\le 2$} by Lemma~\ref{lemma:small-2-group}.
\end{proof}

Let us show that the bound provided by Lemma~\ref{lemma:deg-4-bound}
is attained.

\begin{example}\label{example:dP4}
Let $S$ be a surface over the field $\RR$
defined in $\PP^4$ with homogeneous coordinates $x$, $y$, $z$, $t$, and $u$ by equations
$$
x^2+y^2+z^2+t^2+u^2=x^2-y^2+2z^2-2t^2=0.
$$
Then $S$ is a del Pezzo surface of degree $4$ without $\RR$-points.
The automorphism group of~$S$ contains a subgroup
$$
G\cong\mumu_2^4\rtimes\mumu_2,
$$
where $\mumu_2^4$ acts by changes of signs of the variables, while the right factor $\mumu_2$
simultaneously interchanges $x$ with $y$ and $z$ with $t$.
Thus
$$
J\big(\Aut(S)\big)\ge J(G)=2.
$$
Therefore, by Lemma~\ref{lemma:deg-4-bound} we have $J(\Aut(S))=2$.
\end{example}

\begin{remark}
A classification of possible automorphism groups of del Pezzo surfaces of
degree $4$ over any field of characteristic zero was obtained in~\cite{Smith}. However,
the approach of~\cite{Smith} does not give a way to figure out which groups act on del Pezzo
surfaces without points over the base field.
\end{remark}

\section{Del Pezzo surfaces of degree $6$}
\label{section:dP6}

In this section we find an estimate for Jordan constants
of automorphism groups of del Pezzo surfaces of degree $6$
without points.

\begin{lemma}\label{lemma:dP6}
Let $S$ be a del Pezzo surface of degree $6$ over a field $\KK$ of characteristic
zero. Suppose that
$S$ has no $\KK$-points. Then $J(\Aut(S))\le 4$.
Furthermore, if $S$ is minimal, then $J(\Aut(S))\le 3$.
\end{lemma}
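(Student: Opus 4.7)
The plan is to analyze the image $\Gamma \subseteq D_{12}$ of $\Gal(\bar{\KK}/\KK)$ acting on the hexagon of six $(-1)$-curves on $S_{\bar{\KK}}$, where $D_{12}$ is the full symmetry group of the hexagon, coinciding with the Weyl group $W(\mathrm{A}_1 \times \mathrm{A}_2)$. Since $\Aut(S)$ commutes with Galois, its image in $D_{12}$ lies in the centralizer $Z_{D_{12}}(\Gamma)$.

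First I would restrict $\Gamma$ using the absence of $\KK$-points. A Galois-fixed $(-1)$-curve would yield a $\KK$-point, since it is embedded as a line in the anticanonical model $S \hookrightarrow \PP^6$; and a Galois orbit of two adjacent $(-1)$-curves would have a single $\KK$-rational intersection point. Running through the six reflections in $D_{12}$ (three through pairs of opposite vertices, three through midpoints of opposite edges), both types are excluded, and $\Gamma = \{e\}$ is excluded as well. Hence $\Gamma$ is a non-trivial subgroup of the rotation subgroup $\langle r \rangle \cong \mumu_6$, so $\Gamma \in \{\mumu_2, \mumu_3, \mumu_6\}$. A short eigenvalue computation of the action on $\Pic(S_{\bar{\KK}}) \otimes \QQ$ (with eigenvalues $1, -1, \omega, \bar\omega$ for $r$) shows that $S$ is minimal if and only if $\Gamma = \mumu_6$.

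Next I would exploit the extension $1 \to T \to \Aut(S_{\bar{\KK}}) \to D_{12} \to 1$, where $T$ is the two-dimensional torus forming the identity component. For any finite $G \subseteq \Aut(S)$, the intersection $A := G \cap T(\KK)$ is abelian and normal in $G$, and the quotient $G_0 := G/A$ embeds into $Z_{D_{12}}(\Gamma)$. When $\Gamma \in \{\mumu_3, \mumu_6\}$ this centralizer equals the cyclic group $\langle r \rangle \cong \mumu_6$; when $\Gamma = \langle r^3 \rangle$ it is the whole of $D_{12}$, but in that case $\Aut(S)$ additionally permutes the three Galois-invariant pairs of opposite $(-1)$-curves, giving a further map $G_0 \to \SSS_3$ whose kernel is contained in $\langle r^3 \rangle$. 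In each case the idea is to produce an abelian subgroup of $G$ of small index by taking the preimage of the subgroup of $G_0$ that acts trivially on $A$. A key input is that the central involution $r^3 \in D_{12}$ acts on $T$ by inversion, so if $A$ has exponent two then $r^3$ lies in this action kernel; more generally, the faithfulness of the $G_0$-action on $A$ is controlled by the Galois twist of $T$.

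I expect the main obstacle to be making this index bound tight in the non-minimal cases. A priori $|G_0|$ could be as large as $12$ (for $\Gamma = \langle r^3 \rangle$) or $6$ (for $\Gamma = \mumu_3$), and to bring it down to $4$ one must combine several inputs: the three-pairs action described above (for $\Gamma = \mumu_2$), the adjacency-intersection argument (which also rules out edge-midpoint reflections in the image of $\Aut(S)$), and Corollary~\ref{corollary:dP6-deg-2-3} which forbids the coexistence of points of degrees $2$ and $3$. For the minimal case $\Gamma = \mumu_6$ the refinement is cleaner: computing that $T(\bar{\KK})^{\langle r \rangle} = \{e\}$ (equivalently $\det(r-I) = 1$ on the character lattice) limits how the $\mumu_6$-quotient can act on $A$, and one concludes that an abelian subgroup of index at most $3$ exists in $G$, giving $J(\Aut(S)) \le 3$.
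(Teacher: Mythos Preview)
Your proposal has a genuine gap in the first step. You argue that no reflection can lie in the Galois image $\Gamma$, because a vertex-reflection fixes a line and an edge-midpoint reflection has an orbit consisting of two adjacent lines. But this only excludes the case where $\Gamma$ is \emph{generated} by a single reflection; it does not exclude reflections from appearing inside a larger~$\Gamma$. Concretely, take the ``natural'' copy of $\SSS_3$ inside $\SSS_3\times\mumu_2\cong D_{12}$, the one permuting $E_1,E_2,E_3$ among themselves and $L_{12},L_{13},L_{23}$ among themselves. This subgroup contains three vertex-reflections, yet it has no fixed vertex and no orbit consisting of two adjacent vertices. And it does occur as a Galois image on a pointless surface: blow up a non-trivial Severi--Brauer surface at a closed point of degree~$3$ whose residue field is a non-Galois cubic extension of~$\KK$. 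So your conclusion that $\Gamma\subset\langle r\rangle\cong\mumu_6$ is false, and with it your characterisation ``$S$ minimal iff $\Gamma=\mumu_6$'' (the other $\SSS_3$ and the full $D_{12}$ also give minimal~$S$). Fortunately the missing cases are harmless --- the centraliser of any $\SSS_3$ or of $D_{12}$ in $D_{12}$ is the centre $\langle\sigma\rangle\cong\mumu_2$, forcing $|\Xi|\le 2$ --- but the case split must be corrected.

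A second, more diffuse issue is that the remainder of the sketch is too vague to assess. In the minimal case you invoke $T(\bar{\KK})^{\langle r\rangle}=\{e\}$, but it is unclear how this yields an abelian subgroup of index at most~$3$ in every finite $G\subset\Aut(S)$; the vanishing of $\bar{\KK}$-fixed points of~$r$ on the torus says nothing direct about the action of the image $G_0\subset\mumu_6$ on the finite group $A=G\cap T(\KK)$. The paper's route is cleaner here: it works with the image $\Xi$ of $\Aut(S)$ in $D_{12}$ rather than with~$\Gamma$, shows by Brauer-group and contraction arguments that $\Xi$ cannot be $\SSS_3$, $\mumu_6$, or $D_{12}$ when $S$ is pointless, and hence $|\Xi|\le 4$; then in the minimal case the centraliser constraint combined with $\Xi\neq\mumu_6$ forces $|\Xi|\le 3$. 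If you want to salvage your approach, you should either prove directly that $\Xi\neq\mumu_6$ (this is where Corollary~\ref{corollary:dP6-deg-2-3} is really used), or make the torus-fixed-point argument precise.
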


\begin{proof}
Recall that the dual graph of the configuration of $(-1)$-curves on $S_{\bar{\KK}}$ is a hexagon.
The automorphism group of this configuration
is the dihedral group of order~$12$, which is isomorphic to $\SSS_3\times\mumu_2$.
The action of the group $\Aut(S)$ on
the configuration of $(-1)$-curves provides an exact sequence
$$
1\to T\to \Aut(S)\to \Xi\to 1,
$$
where $T\subset (\bar{\KK}^*)^2$ and $\Xi\subset\SSS_3\times\mumu_2$, see for instance~\mbox{\cite[Theorem~8.4.2]{Dolgachev}}.
Similarly, the Galois group $\Gal(\bar{\KK}/\KK)$ acts on the
configuration of $(-1)$-curves; denote the image of this group in
$\SSS_3\times\mumu_2$ by $\Gamma$. Obviously, the group $\Gamma$ is contained in the centralizer of the group
$\Xi$ in $\SSS_3\times\mumu_2$. Denote by $\sigma$ the non-trivial element of the center $\mumu_2$ of $\SSS_3\times\mumu_2$.

Let us prove the first assertion. If $|\Xi|\le 4$, then the assertion of the lemma follows immediately.
Thus, we need to consider the cases when $\Xi\cong\mumu_6$, $\Xi\cong\SSS_3$, and $\Xi\cong\SSS_3\times\mumu_2$.
(Note that $\SSS_3\times\mumu_2$ has two conjugacy classes of subgroups isomorphic to $\SSS_3$!)

Suppose that $\Xi\cong\SSS_3$ or $\Xi\cong\SSS_3\times\mumu_2$.
Since $\Gamma$ commutes with $\Xi$, we see that $\Gamma$ is contained in the group
$\mumu_2$ generated by $\sigma$. Hence the three contractions $\theta_i$, $1\le i\le 3$, of
pairs of disjoint $(-1)$-curves on $S_{\bar{\KK}}$ which are interchanged by $\sigma$ are all
defined over~$\KK$. Each of them maps $S$ to a product of two conics.
Furthermore, these products are $C_1\times C_2$, $C_2\times C_3$, and $C_3\times C_1$
for some conics $C_1$, $C_2$, and $C_3$ such that the sum of their classes in the Brauer group of $\KK$
equals $0$, cf.~\mbox{\cite[Lemma~3.2]{Trepalin-pointless}}.
\begin{equation}\label{eq:3-contractions}
\xymatrix{
&C_1\times C_2&\\
&S\ar@{->}[u]_{\theta_1}\ar@{->}[ld]_{\theta_2}\ar@{->}[rd]^{\theta_3} & \\
C_2\times C_3&& C_3\times C_1
}
\end{equation}
Since $\Xi\supset\mumu_3$, we see that the group $\Aut(S)$ transitively permutes
the contractions~$\theta_i$. In particular, this implies that
$$
C_1\times C_2\cong C_2\times C_3\cong C_3\times C_1,
$$
so that $C_1\cong C_2\cong C_3$. Together with the condition on the sum of the classes of $C_i$ in
the Brauer group, this gives~\mbox{$C_i\cong\PP^1$}. Hence $C_1\times C_2\cong \PP^1\times\PP^1$ has a $\KK$-point.
Therefore, by Theorem~\ref{theorem:Lang-Nishimura} the surface~$S$ has a $\KK$-point as well, which gives a contradiction.

Now suppose that $\Xi\cong\mumu_6$. Since $\Gamma$ commutes with $\Xi$, we see that $\Gamma$ is contained in~$\Xi$.
Thus there exists a field extension $\KK\subset\LL_3$ of degree $3$ such that the three
pairs of disjoint $(-1)$-curves on $S_{\bar{\KK}}$ which are interchanged by $\sigma$ are all
defined over~$\LL_3$. Hence there is a diagram of contractions~\eqref{eq:3-contractions} over $\LL_3$.
The group $\Aut(S)$ transitively permutes the three contractions $\theta_i$, and hence the surface $S$ has a point over $\LL_3$.
Since $S$ has no $\KK$-points, this means that $S$ has a point of degree $3$ over $\KK$.
Furthermore,  there exists a field extension $\KK\subset\LL_2$ of degree $2$ such that each of the two
triples of pairwise disjoint $(-1)$-curves on $S_{\bar{\KK}}$ is
defined over~$\LL_2$. The contractions $\eta$ and $\eta^{\mathrm{op}}$ of these triples map~$S_{\LL_2}$ to two Severi--Brauer surfaces
$T$ and $T^{\mathrm{op}}$ defined over $\LL_2$. Furthermore, the sum of the classes of~$T$ and~$T^{op}$ in the Brauer group of~$\KK$
equals $0$, see for instance~\mbox{\cite[Lemma~3.1(c)]{IskovskikhTregub}}.
$$
\xymatrix{
&S_{\LL_2}\ar@{->}[ld]_{\eta}\ar@{->}[rd]^{\eta^{\mathrm{op}}}&\\
T&&T^{\mathrm{op}}
}
$$
Since $\Xi\ni\sigma$, the group $\Aut(S)$ interchanges these two contractions, which gives $T\cong T^{\mathrm{op}}$, and hence
$T\cong\PP^2$. Thus $T$ has an $\LL_2$-point, and by Theorem~\ref{theorem:Lang-Nishimura} the surface $S$ has an $\LL_2$-point as well.
Since $S$ has no $\KK$-points, it has a point of degree $2$ over $\KK$.
According to Corollary~\ref{corollary:dP6-deg-2-3}, this implies that $S$ has a $\KK$-point, which gives a contradiction.
Therefore, we conclude that $J(\Aut(S))\le 4$.

Finally, suppose that $S$ is minimal. Then either $\Gamma\cong\mumu_6$, or $\Gamma\cong\SSS_3$, or $\Gamma\cong\SSS_3\times\mumu_2$.
Since $\Xi$ commutes with $\Gamma$, we see that in the former case one has $\Xi\subset\mumu_6$, and in the latter two cases
$\Xi$ is a subgroup of the center $\mumu_2$ of $\SSS_3\times\mumu_2$. However, we already know that the case when $\Xi\cong\mumu_6$ is impossible.
Hence in the former case one has $|\Xi|\le 3$. Therefore, in all possible cases we have
$$
J(\Aut(S))\le |\Xi|\le 3.
$$
\end{proof}

Let us show that the first bound provided by Lemma~\ref{lemma:dP6}
is attained.

\begin{example}\label{example:dP6}
Let $C$ be a conic over the field $\RR$ defined in $\PP^2$
with homogeneous coordinates $x$, $y$, and~$z$ by equation
\begin{equation}\label{eq:xyz}
x^2+y^2+z^2=0.
\end{equation}
Set $\hat{S}=C\times C$, and let
$$
\Sigma=\{(1:\sqrt{-1}:0)\times (1:\sqrt{-1}:0),\quad (1:-\sqrt{-1}:0)\times (1:-\sqrt{-1}:0)\}.
$$
Thus, $\Sigma$ is a $\Gal(\CC/\RR)$-orbit of length $2$ on $C\times C$. Let $S\to\hat{S}$ be the blow up of
$\Sigma$. Then~$S$ is a del Pezzo surface of degree $6$, and $S$ has no $\RR$-points, because $\hat{S}$ has no $\RR$-points.

Observe that for any positive integer $n$ there is an action of the group $\mumu_n$ on $C$ such that a generator of $\mumu_n$ acts as
\begin{equation}\label{eq:xyz-action}
(x:y:z)\mapsto \left(x\cos\frac{2\pi}{n}+y\sin\frac{2\pi}{n}: -x\sin\frac{2\pi}{n}+y\cos\frac{2\pi}{n}:z\right),
\end{equation}
and the points $(1:\sqrt{-1}:0), (1:-\sqrt{-1}:0)\in C(\CC)$
are fixed under this action.
Let the group $G\cong\mumu_n^2\rtimes\mumu_2^2$ act on
$\hat{S}$ so that the generators of the factors of $\mumu_n^2$ act
as in~\eqref{eq:xyz-action}
on the first and the second factor of $C\times C$, respectively;
the non-trivial element~$\iota_1$ of the first $\mumu_2$ acts as
$$
(x_1:y_1:z_1)\times (x_2:y_2:z_2)\mapsto (x_1:-y_1:z_1)\times (x_2:-y_2:z_2),
$$
while the non-trivial element $\iota_2$ of the second $\mumu_2$ acts as
$$
(x_1:y_1:z_1)\times (x_2:y_2:z_2)\mapsto (x_2:y_2:z_2)\times (x_1:y_1:z_1).
$$
Then the set $\Sigma$ is $G$-invariant, and so $G\subset\Aut(S)$.
Observe that, as predicted by Lemma~\ref{lemma:dP6}, one has $J(G)\le 4$, because the index of the normal
abelian subgroup~$\mumu_n^2$ in~$G$ equals~$4$.

Now suppose that $n\ge 5$ is a prime number.
Obviously, the elements $\iota_1$, $\iota_2$, and $\iota_1\iota_2$
do not centralize $\mumu_n^2$. Hence any abelian subgroup $A$ of $G$ which
is not contained in $\mumu_n^2$ intersects
$\mumu_n^2$ by a proper subgroup therein. Since $n$ is prime,
this implies that
$$
\frac{|G|}{|A|}\ge \frac{|\mumu_n^2|}{|A\cap\mumu_n^2|}\ge n>4.
$$
Therefore, we have $J(\Aut(S))=4$.
\end{example}

\section{Del Pezzo surfaces of degree $8$}
\label{section:dP8}

In this section we find an estimate for Jordan constants
of automorphism groups of del Pezzo surfaces of degree $8$.

Since every del Pezzo surface is geometrically rational,
Theorem~\ref{theorem:Egor} implies

\begin{corollary}\label{corollary:dP8}
Let $S$ be a del Pezzo surface over a field of characteristic
zero. Then one has~\mbox{$J(\Aut(S))\le 7200$}.
\end{corollary}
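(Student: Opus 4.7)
The plan is to reduce the statement directly to Theorem~\ref{theorem:Egor} by passing to the algebraic closure. First I would observe that since $\KK$ has characteristic zero, so does its algebraic closure $\bar{\KK}$, and $\bar{\KK}$ is algebraically closed (of characteristic zero) as required by Theorem~\ref{theorem:Egor}. The hypothesis that $S$ is a del Pezzo surface is used only through the fact that $S_{\bar{\KK}}$ is rational: this is classical for smooth surfaces with ample anticanonical class, so $S_{\bar{\KK}}$ is birational to $\PP^2_{\bar{\KK}}$, which yields an isomorphism
\[
\Bir(S_{\bar{\KK}}) \cong \Bir(\PP^2_{\bar{\KK}}).
\]

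Next, the natural scalar extension map gives an injection $\Aut(S) \hookrightarrow \Aut(S_{\bar{\KK}})$, and in particular $\Aut(S)$ embeds into $\Bir(S_{\bar{\KK}}) \cong \Bir(\PP^2_{\bar{\KK}})$. Any finite subgroup of $\Aut(S)$ is then a finite subgroup of $\Bir(\PP^2_{\bar{\KK}})$, and by Theorem~\ref{theorem:Egor} such a subgroup contains a normal abelian subgroup of index at most $7200$. Taking the supremum over all finite subgroups gives $J(\Aut(S)) \le 7200$, as claimed.

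There is essentially no obstacle here; the only point requiring a word of care is the injectivity of $\Aut(S)\to\Aut(S_{\bar{\KK}})$, which is immediate because base change is faithful on morphisms of varieties over~$\KK$. Thus the proof should be only a few lines, invoking the geometric rationality of del Pezzo surfaces and then quoting Theorem~\ref{theorem:Egor}.
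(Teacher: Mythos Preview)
Your proof is correct and follows exactly the same approach as the paper, which simply notes that every del Pezzo surface is geometrically rational and then invokes Theorem~\ref{theorem:Egor}. You have merely spelled out in more detail the injection $\Aut(S)\hookrightarrow\Bir(S_{\bar{\KK}})\cong\Bir(\PP^2_{\bar{\KK}})$ and the monotonicity of the Jordan constant under passage to subgroups.
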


Let us show that the bound provided by Corollary~\ref{corollary:dP8}
is attained.

\begin{example}\label{example:dP8}
Let $C$ be a conic over the field $\RR$ defined in $\PP^2$
by equation~\eqref{eq:xyz}.
Note that $\Aut(C)$ contains the alternating group $\AAA_5$.
Let $S=C\times C$. Then $S$ is a del Pezzo surface of degree $8$
without $\RR$-points, and
$\Aut(S)$ contains a subgroup
$$
G\cong (\AAA_5\times\AAA_5)\rtimes\mumu_2.
$$
Thus
$$
J\big(\Aut(S)\big)\ge J(G)=|G|=7200.
$$
Therefore, by Corollary~\ref{corollary:dP8} we have $J(\Aut(S))=7200$.
\end{example}

For minimal del Pezzo surfaces of degree $8$ (i.e. for those
with Picard rank~$1$), we can
obtain a better bound for the Jordan constant of the automorphism group
than the bound provided by Corollary~\ref{corollary:dP8}.
To do this, we need the following simple fact.

\begin{lemma}\label{lemma:P1}
Let $C$ be a conic over a field $\KK$ of characteristic zero. Then
$$
J\big(\Aut(C)\big)\le 60.
$$
Furthermore,
if a group $\Gamma$ contains~\mbox{$\Aut(C)$} as a subgroup of index $2$,
then~\mbox{$J(\Gamma)\le 120$}.
\end{lemma}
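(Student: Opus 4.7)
The plan is to embed $\Aut(C)$ into $\PGL_2(\bar{\KK})$ and invoke the classical classification of its finite subgroups. Since $C_{\bar{\KK}}$ is a smooth conic over an algebraically closed field, it is isomorphic to $\PP^1_{\bar{\KK}}$, so $\Aut(C_{\bar{\KK}}) \cong \PGL_2(\bar{\KK})$. The natural restriction homomorphism $\Aut(C)\hookrightarrow \Aut(C_{\bar{\KK}})$ is injective, hence every finite subgroup of $\Aut(C)$ is isomorphic to a finite subgroup of $\PGL_2(\bar{\KK})$. By the classical classification, the latter is one of: a cyclic group $\mumu_n$, a dihedral group of order $2n$, $\AAA_4$, $\SSS_4$, or $\AAA_5$.

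For the first assertion, I would verify that each group on this list contains a \emph{characteristic} abelian subgroup of index at most $60$: the whole group for $\mumu_n$; the cyclic rotation subgroup of index $2$ for the dihedral case; the Klein four-group $\mumu_2^2$ of index $3$ in $\AAA_4$ (its unique Sylow $2$-subgroup); the same $\mumu_2^2$ of index $6$ in $\SSS_4$ (its unique minimal normal subgroup); and the trivial subgroup of index $60$ in the simple group $\AAA_5$. This immediately gives $J(\Aut(C)) \le 60$.

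For the second assertion, let $F \subset \Gamma$ be a finite subgroup, and set $H = F \cap \Aut(C)$, so that $H$ is normal in $F$ of index at most $2$. By the previous step, $H$ admits a characteristic abelian subgroup $A$ with $[H : A] \le 60$. Since $A$ is characteristic in the normal subgroup $H \subset F$, it is itself normal in $F$, and therefore
$$
[F : A] = [F : H] \cdot [H : A] \le 2 \cdot 60 = 120,
$$
which yields $J(\Gamma) \le 120$. The crucial point, and the main (minor) obstacle to a completely automatic argument, is that the abelian subgroup chosen inside $H$ must be \emph{characteristic} so that it survives conjugation by elements of $F \setminus H$; for the groups other than $\AAA_5$ this follows from the uniqueness properties listed above, while for $\AAA_5$ one just uses the trivial subgroup, whose image in $F$ still has index at most $120$.
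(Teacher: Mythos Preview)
Your proof is correct, and for the second assertion it follows a genuinely different route from the paper. The paper does not look for a characteristic abelian subgroup; instead, after forming $G' = G\cap\Aut(C)$, it invokes the general bound that $J(G)$ is at most the square of the index of \emph{any} (not necessarily normal) abelian subgroup of $G$ (citing \cite[Theorem~1.41]{Isaacs}). Thus in the cyclic and dihedral cases a cyclic subgroup of index at most~$4$ in $G$ yields $J(G)\le 16$, while for $G'\in\{\AAA_4,\SSS_4,\AAA_5\}$ one simply uses $J(G)\le |G|\le 120$. Your argument avoids the external reference entirely by exploiting that the chosen abelian subgroup is characteristic in $H$, hence automatically normal in $F$; this is more self-contained and in fact yields sharper intermediate bounds (e.g.\ index $\le 4$ rather than $J\le 16$ in the dihedral case). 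One tiny caveat: for the dihedral group of order~$4$, i.e.\ the Klein four-group, the ``rotation subgroup'' of index~$2$ is not characteristic, but since the whole group is abelian you can simply take $A=H$ there; this does not affect your conclusion.
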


\begin{proof}
The first assertion follows from  the well known classification of finite
subgroups of~\mbox{$\Aut(\PP^1)$}.

To prove the second assertion, let $G\subset \Gamma$ be a finite subgroup.
Recall that the constant $J(G)$ is bounded by a square of the index of any abelian subgroup
in~$G$, see e.g.~\mbox{\cite[Theorem~1.41]{Isaacs}}.
The intersection
$$
G'=G\cap \Aut(C)\subset \Gamma
$$
is a subgroup of index at most $2$ in $G$.
If $G'$ is cyclic or dihedral, then $G'$ contains a cyclic subgroup of index at most $2$;
thus $G$ contains a cyclic subgroup of index at most $4$, which gives
$J(G)\le 4^2=16$. On the other hand,
if $G'$ is one of the three exceptional finite groups
$\mathfrak{A}_4$, $\mathfrak{S}_4$, or $\mathfrak{A}_5$,
then~\mbox{$J(G)\le |G|\le 120$}. 
\end{proof}

Using Lemma~\ref{lemma:P1}, we deduce

\begin{lemma}\label{lemma:dP8-minimal}
Let $S$ be a minimal del Pezzo surface of degree $8$ over a field $\KK$
of characteristic zero.
Then $J(\Aut(S))\le 120$.
\end{lemma}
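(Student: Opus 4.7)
The plan is to realize $\Aut(S)$ as a small extension of a subgroup of the automorphism group of a conic, and then mimic the argument of Lemma~\ref{lemma:P1}.

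First I would use minimality to pin down the geometry of $S$. If $S_{\bar{\KK}}\not\cong\PP^1\times\PP^1$, then $S_{\bar{\KK}}$ is the one-point blow up of $\PP^2$, whose unique $(-1)$-curve is automatically Galois-invariant, descends to~$\KK$, and can be contracted; this contradicts minimality. Similarly, if $\Gal(\bar{\KK}/\KK)$ preserves each of the two rulings on $S_{\bar{\KK}}$, then each ruling descends to a morphism defined over $\KK$ and $\rkPic(S)\ge 2$; so minimality forces the rulings to be swapped. Let $\LL/\KK$ be the quadratic extension over which each ruling is individually defined. Then $S_\LL\cong C_1\times C_2$ for two conics $C_1,C_2$ over $\LL$ that are interchanged by the nontrivial element $\sigma\in\Gal(\LL/\KK)$.

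Next I would consider the subgroup $\Aut_0(S)\subset\Aut(S)$ of automorphisms preserving each ruling; this has index at most~$2$ in $\Aut(S)$. The first projection $C_1\times C_2\to C_1$ yields a homomorphism $\Aut_0(S)\to\Aut_\LL(C_1)$. I claim this homomorphism is injective: an element acting trivially on $C_1$ also acts trivially on $C_2$ (the two actions are $\sigma$-conjugate) and is therefore trivial on $S_\LL$. Since $\Aut_\LL(C_1)$ embeds into $\PGL_2(\bar{\KK})$, every finite subgroup of $\Aut_0(S)$ is cyclic, dihedral, or isomorphic to one of $\AAA_4$, $\SSS_4$, $\AAA_5$.

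The bound is then obtained by repeating, almost verbatim, the case analysis in the proof of Lemma~\ref{lemma:P1}. For a finite subgroup $G\subset\Aut(S)$, set $G_0=G\cap\Aut_0(S)$, a subgroup of index at most~$2$ in $G$. If $G_0$ is cyclic or dihedral, then $G_0$ contains a cyclic subgroup of index at most~$2$, so $G$ contains an abelian subgroup of index at most~$4$ and hence $J(G)\le 4^2=16$. If $G_0$ is one of $\AAA_4$, $\SSS_4$, or $\AAA_5$, then $|G|\le 120$ and so $J(G)\le|G|\le 120$. The main obstacle is the geometric step of recognizing $\Aut_0(S)$ as a subgroup of the $\LL$-automorphism group of a single conic via first projection; once that identification is in place, the Jordan bound follows immediately from the classification of finite subgroups of $\PGL_2(\bar{\KK})$ and the same case analysis used for Lemma~\ref{lemma:P1}.
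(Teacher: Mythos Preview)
Your proof is correct and follows essentially the same approach as the paper: the paper cites \cite[Proposition~7.3]{SV} to identify $S$ as a Weil restriction of a conic and obtain an index-$\le 2$ subgroup of $\Aut(S)$ isomorphic to $\Aut_\LL(C)$, whereas you reconstruct this fact directly via the projection $S_\LL\cong C_1\times C_2\to C_1$ and the observation that Galois-invariance forces the two component actions to be $\sigma$-conjugate. The remaining case analysis is identical to that of Lemma~\ref{lemma:P1}, which is exactly what the paper invokes.
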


\begin{proof}
Since $S$ is minimal, we have $S_{\bar{\KK}}\cong\PP^1\times\PP^1$.
Moreover, according to \cite[Proposition~7.3]{SV}, the surface $S$
is the Weil restriction of scalars of some conic $C$ defined over a quadratic extension $\LL$ of $\KK$, and
$\Aut(S)$ contains a subgroup of index at most $2$ isomorphic to $\Aut(C)$.
Now the assertion follows from Lemma~\ref{lemma:P1}.
\end{proof}

The bound provided by Lemma~\ref{lemma:dP8-minimal}
is attained for minimal del Pezzo surfaces of degree $8$ without points over the field of rational numbers~$\QQ$.

\begin{example}\label{example:dP8-minimal}
Let $\LL=\QQ(\sqrt{5})$.
Then a three-dimensional irreducible representation
of the icosahedral group $\AAA_5$ is defined over~$\LL$:
indeed, the matrices
$$
M_{(12345)}=\left(\begin{array}{ccc}
1 & 0 & 0\\
0 & \cos\frac{2\pi}{5}  & -\sin\frac{2\pi}{5}\\
0 & \sin\frac{2\pi}{5} & \cos\frac{2\pi}{5}
\end{array}\right),
\quad
M_{(12)(34)}=\left(\begin{array}{ccc}
\frac{1}{\sqrt{5}} & \frac{2}{\sqrt{5}} & 0\\
\frac{2}{\sqrt{5}} & -\frac{1}{\sqrt{5}} & 0\\
0 & 0 & -1
\end{array}\right)
$$
satisfy the relations
$$
M_{(12345)}^5=M_{(12)(34)}^2=\big(M_{(12345)}M_{(12)(34)}\big)^3=1,
$$
which imply that they generate a group isomorphic to $\AAA_5$,
see e.g.~\mbox{\cite[Beispiel~19.9]{Huppert}}.
Let $C$ be a conic over the field $\LL$ defined in~$\PP^2$ by equation~\eqref{eq:xyz}.
Then $C$ is the extension of scalars of the conic defined by the same equation~\eqref{eq:xyz}
over~$\QQ$, and $C$ does not contain $\LL$-points.
The group $\AAA_5$ generated by~$M_{(12345)}$ and~$M_{(12)(34)}$ acts on~$C$.
Note that the Galois group
$$
\mathrm{Gal}(\LL/\QQ)\cong\mumu_2
$$
acts on the group $\AAA_5\subset\Aut(C)$ by an involution which is not an inner automorphism.

Let $S$ be the Weil restriction of scalars of $C$. Then
$S$ is a del Pezzo surface of degree~$8$ over $\QQ$, and $S$ does not contain $\QQ$-points.
According to \cite[Proposition~7.3]{SV}, the surface $S$ is minimal, and
$$
\Aut(S)\cong \Aut(C)\rtimes\mumu_2.
$$
The latter group contains the non-trivial semi-direct product $\AAA_5\rtimes\mumu_2\cong\SSS_5$.
Thus
$$
J(\Aut(S))\ge J(\SSS_5)=|\SSS_5|=120.
$$
Therefore, by Lemma~\ref{lemma:dP8-minimal} we have $J(\Aut(S))=120$.
\end{example}

\begin{example}\label{example:dP8-minimal-alternative}
Consider the smooth quadric surface $S$ defined by equations
$$
x+y+z+t+w=x^2+y^2+z^2+t^2+w^2=0
$$
in the projective space $\PP^4$ over $\QQ$ with homogeneous coordinates
$x$, $y$, $z$, $t$, and~$w$.
Obviously, $S$ has no $\QQ$-points.
Furthermore, the group $\SSS_5$ acts on $\PP^4$ by permutations of homogeneous coordinates, and preserves the
surface $S$. Thus, we have
$$
J(\Aut(S))\ge J(\SSS_5)=|\SSS_5|=120.
$$
Observe that $S$ is minimal. Indeed, otherwise
$S$ has two structures of conic bundles defined over~$\QQ$, and a subgroup of $\SSS_5$ of index at most $2$
preserves each of these conic bundles. In other words, the group $\AAA_5$ must act
on a conic defined over $\QQ$, which is impossible, because~$\AAA_5$ has no non-trivial three-dimensional
representations over~$\QQ$.
Therefore, Lemma~\ref{lemma:dP8-minimal} gives~\mbox{$J(\Aut(S))=120$}.
%Observe that $S_{\bar{\QQ}}$ contains a line $L$ defined
%by equations
%$$
%x+y+z+t+w=x+y+(1-\omega^2)z+(1-w)t=
%y+\frac{1-\omega^2+\sqrt{5}\omega}{2}z
%+\frac{1-\omega-\sqrt{5}\omega^2}{2}t=0,
%$$
%where $\omega$ is a non-trivial cube root of unity.
%An element of $\Gal(\bar{\QQ}/\QQ)$ which preserves $\omega$ and
%sends $\sqrt{5}$ to $-\sqrt{5}$ maps $L$ to another line $L'\subset S$
%which intersects $L$.

We point out that the surface $S$ is isomorphic to the surface described
in Example~\ref{example:dP8-minimal}. Indeed,
it is minimal over $\QQ$, and its extension of scalars to
some quadratic extension~$\LL$ of~$\QQ$ has Picard rank $2$. This implies that $S$
is a Weil restriction of scalars
of some conic~$C$ defined over the field~$\LL$, and $C$ must be acted on
by a subgroup of index at most $2$ in~$\SSS_5$. Thus, there is an action of
$\AAA_5$ on $C$. Hence $\LL=\QQ(\sqrt{5})$, and $C$ is isomorphic
to the conic defined by equation~\eqref{eq:xyz}, because conics over other quadratic extensions of $\QQ$,
as well as other conics over $\QQ(\sqrt{5})$,
do not contain~$\AAA_5$ in their automorphism groups,
see for instance~\mbox{\cite[Theorem~1.2(c)]{GA}}.
\end{example}

\begin{remark}
According to \cite[Theorem~1.2(4)]{Zaitsev-quadrics}, if $S$ is a smooth quadric surface
with a point over the field~$\QQ$, then $J(\Aut(S))\le 8$.
\end{remark}

\section{Proof of the main results}
\label{section:results}

In this section we complete the proofs of Theorem~\ref{theorem:main}
and Proposition~\ref{proposition:main}.

\begin{proof}[Proof of Theorem~\ref{theorem:main}]
If $d=2$, then
\begin{equation}\label{eq:168}
J(\Aut(S))\le J(\Aut(S_{\bar{\KK}}))\le 168
\end{equation}
by the classification of automorphism groups of del Pezzo surfaces of degree $2$ over algebraically closed fields of characteristic zero,
see~\mbox{\cite[Table~8.9]{Dolgachev}}: the automorphism group of any such surface is a direct product
$\mumu_2\times G$, where $|G|\le 168$.
The bound~\eqref{eq:168} is attained over a suitable field
by Theorem~\ref{theorem:Vikulova}.

For $d=4$, $6$, and $8$ the upper bounds for the Jordan constants are given in
Lemmas~\ref{lemma:deg-4-bound} and~\ref{lemma:dP6},
and Corollary~\ref{corollary:dP8}, respectively. These bounds are attained
over suitable fields according to
Examples~\ref{example:dP4},
\ref{example:dP6}, and~\ref{example:dP8}, respectively.
\end{proof}

\begin{proof}[Proof of Proposition~\ref{proposition:main}]
Observe that a cubic surface without $\KK$-points also has no
points of degree $2$ over $\KK$. On the other hand, it is known that for a minimal del Pezzo
surface~$S$ of degree $d$ over a perfect field $\KK$ such that~$S$ has
no points of degree less than~$d$, one has~\mbox{$\Bir(S)=\Aut(S)$}, see~\mbox{\cite[Theorem~1.6(ii)]{Iskovskikh96}}.
Together with Theorems~\ref{theorem:cubics} and~\ref{theorem:main}(i), this implies the required assertions.
\end{proof}

\end{document}